\DeclareMathOperator*{\esssup}{ess\,sup}
\newcommand{\rdd}{\mathbb{R}^{2d}}
\newcommand{\rd}{\mathbb{R}^{d}}
\newcommand{\C}{\mathbb{C}}
\newcommand{\bN}{\mathbb{N}}
\newcommand{\cS}{\mathcal{S}}
\newenvironment{displaytext}
{\begin{equation}
	\begin{tabular}{@{}l@{}}}
{  \end{tabular}
	\end{equation}}
\newcommand{\red}[1]{#1}
\let\originalleft\left
\let\originalright\right
\renewcommand{\left}{\mathopen{}\mathclose\bgroup\originalleft}
\renewcommand{\right}{\aftergroup\egroup\originalright}
\def\lc{\left(}
\def\rc{\right)}
\def\lV{\Big\|}
\def\rV{\Big\|}
\newtheorem{theorem}{Theorem}[section]
\newtheorem{lemma}[theorem]{Lemma}
\newtheorem{proposition}[theorem]{Proposition}
\newtheorem{remark}[theorem]{Remark}
\numberwithin{equation}{section}
\title[On the existence of time-frequency optimizers]{On the existence of optimizers for time-frequency concentration problems}
\author[F. Nicola]{Fabio Nicola}
\address{Dipartimento di Scienze Matematiche, Politecnico di Torino, Corso Duca degli Abruzzi 24, 10129 Torino, Italy}
\email{fabio.nicola@polito.it}
\author[J. L. Romero]{Jos\'e Luis Romero}
\address{Faculty of Mathematics, University of Vienna, Oskar-Morgenstern-Platz 1, A-1090 Vienna, Austria, and, Acoustics Research Institute, Austrian Academy of Sciences, Wohllebengasse 12-14 A-1040, Vienna, Austria}
\email{jose.luis.romero@univie.ac.at}
\author[S. I. Trapasso]{S. Ivan Trapasso}
\address{MaLGa Center - Department of Mathematics, University of Genova, via Dodecaneso 35, 16146 Genova, Italy}
\email{salvatoreivan.trapasso@unige.it}
\date{}
\begin{document}
\begin{abstract}
We consider the problem of the maximum concentration in a fixed \red{measurable} subset $\Omega\subset\rdd$ of the time-frequency space for functions $f\in L^2(\rd)$.  
The notion of concentration can be made mathematically precise by considering the $L^p$-norm on $\Omega$ of some time-frequency distribution of $f$ such as the ambiguity function $A(f)$. We provide a positive answer to an open maximization problem, by showing that for every subset $\Omega\subset\rdd$ of finite measure and every $1\leq p<\infty$, there exists an optimizer for
\[
 \sup\{\|A(f)\|_{L^p(\Omega)}:\ f\in L^2(\rd),\  \|f\|_{L^2}=1
 \}.
 \]
The lack of weak upper semicontinuity and the invariance under time-frequency shifts make the problem challenging. The proof is based on concentration compactness with time-frequency shifts as dislocations, and certain integral bounds and asymptotic decoupling estimates for the ambiguity function. We also discuss the case $p=\infty$ and related optimization problems for the time correlation function, the cross-ambiguity function with a fixed window, and for functions in the modulation spaces $M^q(\rd)$, $0<q<2$, equipped with continuous or discrete-type (quasi-)norms. 
\end{abstract}

\subjclass[2010]{49Q10, 49R05, 42B10, 94A12,
81S30}
\keywords{Time-frequency concentration, optimization, ambiguity function, concentration compactness}
\maketitle

\section{Introduction and discussion of the main results}
The notion of concentration of a function $f\in L^2(\rd)$ in a \red{measurable} subset $\Omega\subset\rdd$ of the time-frequency space is central in harmonic analysis and is also at the core signal processing \cite{flandrin_new,mallat_book,grochenig_book, vetterli}. From a mathematical point of view, \red{the study of this issue represents a fascinating
and multifaceted challenge, with a longstanding and distinguished tradition \cite{fuchs,landau1,landaupol,slepian,daubechies}, and it ultimately reduces to one of the different, subtle manifestations of the uncertainty principle \cite{fefferman,folland_up,ricaud_up}.}

A natural family of phase-space concentration measures is given by the $L^p$ norms on $\Omega$ of some \emph{time-frequency distribution} of $f$, such as the short-time Fourier transform (see below), or the \textit{ambiguity function} 
\[
A(f)(x,\omega)=\int_{\rd} f \lc t+\frac{x}{2}\rc \overline{f \lc t-\frac{x}{2}\rc} e^{-2\pi i t\cdot\omega}\, dt,\quad x,\omega\in\rd,
\]
which is a quadratic time-frequency representation popular in engineering and radar applications \cite{cook,woodward,mabohl13}. 

The design of maximally concentrated waveforms is of great theoretical and practical interest, as these provide compact elementary blocks tailored to a given tiling of the time-frequency space, according to a paradigm that dates back to the pioneering work by Gabor \cite{Gab46} at least.
The companion problem of designing pulses with a peaky ambiguity
function is of particular relevance in radar signal analysis \cite[Section 3.4.3]{ricaud_up}, wireless communications \cite{benedetto,charly_wire,rihacz}, and signal recovery.

In spite of the importance of the problem and the extensive numerical experimentation (see e.g. \cite{fei_opt,Ricaud2014}), the existence of an optimizer for the functional $\|A(f)\|_{L^p(\Omega)}$, $1\leq p<\infty$, among the functions $f\in L^2(\rd)$, $\|f\|_{L^2}=1$, is still open. While this fact can seem surprising given the maturity of 
the field of time-frequency analysis, close inspection of the problem soon reveals a number of technical difficulties, including the lack of weak upper semicontinuity of the involved functional and its invariance with respect to a non-compact group of \emph{time-frequency shifts}:
\begin{align}\label{eq_tf}
\pi(z) f(t) = e^{2\pi i t \cdot \omega} f(t-x), \qquad z=(x,\omega) \in \rd\times\rd.
\end{align}
Our main result establishes the existence of optimizers for the $L^p$-norm of the ambiguity function on a domain. 
\begin{theorem}\label{mainthm}
Let $\Omega\subset\rdd$ be a measurable subset of finite, positive measure, and $1\leq p<\infty$. Then the supremum 
\begin{equation}\label{uno}
    \sup_{f\in L^2(\rd)\setminus\{0\}}\frac{\lc\int_\Omega |A(f)(x,\omega)|^p dx d\omega\rc^{1/p}}{\|f\|^2_{L^2}}
\end{equation}
    is attained.
    
    Moreover, for $1<p<\infty$, if $f^{(n)}$ is any maximizing sequence normalized in $L^2(\rd)$, then there exists a subsequence (still denoted by $f^{(n)}$) and $z^{(n)}\in\rdd$ such that $\pi(-z^{(n)})f^{(n)}$ converges in $L^2$ to a maximizer.
\end{theorem}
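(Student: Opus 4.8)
The plan is to run a concentration-compactness argument over $L^2(\rd)$ in which the non-compact group of dislocations is the family of time-frequency shifts \eqref{eq_tf}. The basic structural fact is the covariance identity $|A(\pi(z)f)(w)|=|A(f)(w)|$ for all $z,w\in\rdd$: it shows that the functional in \eqref{uno} is invariant under $f\mapsto\pi(z)f$, and that $A(f)$ always carries its peak $A(f)(0)=\|f\|_{L^2}^2$ at the origin, irrespective of where $f$ sits in phase space. The symmetry is therefore genuinely non-compact, and a maximizing sequence cannot be recentered by translation. Write $M$ for the supremum in \eqref{uno}; it is finite, $M\le|\Omega|^{1/p}$, by $\|A(f)\|_{L^\infty}=\|f\|_{L^2}^2$ and H\"older, and it is positive, as one sees by testing on a Gaussian, whose ambiguity function is a strictly positive Gaussian. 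Fix a maximizing sequence $f^{(n)}$ with $\|f^{(n)}\|_{L^2}=1$.

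The first step is to extract, after translating by a suitable $z^{(n)}\in\rdd$, a nontrivial weak limit. Set $g^{(n)}=\pi(-z^{(n)})f^{(n)}$, so that by shift-invariance $\int_\Omega|A(g^{(n)})|^p=\int_\Omega|A(f^{(n)})|^p\to M^p$. I claim $z^{(n)}$ can be chosen so that, along a subsequence, $g^{(n)}\rightharpoonup\phi$ with $\phi\neq0$. This rests on a cocompactness property of the ambiguity function relative to time-frequency shifts: if a bounded sequence tends weakly to zero modulo all shifts, then $\|A(g^{(n)})\|_{L^q(\rdd)}\to0$ for every $2<q<\infty$. I would derive this by interpolating between $\|A(g)\|_{L^\infty}\le\|g\|_{L^2}^2$ and Moyal's identity $\|A(g)\|_{L^2(\rdd)}=\|g\|_{L^2}^2$, exploiting that absence of concentration forces the higher Lebesgue norms to decay. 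Since $|\Omega|<\infty$ and $p<\infty$, H\"older then gives $\int_\Omega|A(g^{(n)})|^p\lesssim|\Omega|^{1-p/q}\|A(g^{(n)})\|_{L^q}^p\to0$ for $q>\max(p,2)$, contradicting $M>0$; hence a nontrivial profile $\phi$ survives.

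The technical heart is an asymptotic decoupling, of Brezis--Lieb type, for the quadratic functional. Writing $h^{(n)}=g^{(n)}-\phi\rightharpoonup0$, the goal is to establish
\[
\int_\Omega|A(g^{(n)})|^p\,dxd\omega=\int_\Omega|A(\phi)|^p\,dxd\omega+\int_\Omega|A(h^{(n)})|^p\,dxd\omega+o(1).
\]
Since $A$ is quadratic, $A(g^{(n)})=A(\phi)+A(h^{(n)})+A(\phi,h^{(n)})+A(h^{(n)},\phi)$, and the cross-ambiguity terms tend to $0$ pointwise, being linear in the weakly null argument $h^{(n)}$; dominated by $\|\phi\|_{L^2}\|h^{(n)}\|_{L^2}\le C$ on the finite-measure set $\Omega$, they also vanish in $L^p(\Omega)$. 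The real difficulty, which I expect to be the main obstacle, is that $A(g^{(n)})$ does not converge a.e.\ on $\Omega$ --- because $A(h^{(n)})(0)=\|h^{(n)}\|_{L^2}^2$ does not vanish --- so the classical Brezis--Lieb lemma does not apply directly. One must instead show that the contribution of $A(h^{(n)})$ decouples from the fixed profile $A(\phi)$, combining weak nullity of $h^{(n)}$ with the integral bounds above; this is exactly where the $L^q$ estimates for the ambiguity function and the phase-space separation of mass must be used quantitatively.

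Granting the decoupling, the conclusion is elementary. By weak lower semicontinuity $t:=\|\phi\|_{L^2}^2\in(0,1]$, and $\langle\phi,h^{(n)}\rangle\to0$ gives $\|h^{(n)}\|_{L^2}^2\to1-t$. The definition of $M$ and homogeneity yield $\int_\Omega|A(\phi)|^p\le M^pt^p$ and $\int_\Omega|A(h^{(n)})|^p\le M^p\|h^{(n)}\|_{L^2}^{2p}$, so the decoupling identity gives $M^p\le M^p(t^p+(1-t)^p)$. For $1<p<\infty$ one has $t^p+(1-t)^p\le1$ with equality only at $t\in\{0,1\}$; since $\phi\neq0$ forces $t>0$, we get $t=1$, hence $\int_\Omega|A(\phi)|^p=M^p$ (so $\phi$ is a maximizer) and $\|h^{(n)}\|_{L^2}\to0$, i.e.\ $\pi(-z^{(n)})f^{(n)}\to\phi$ strongly in $L^2$, which is the asserted compactness. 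For $p=1$ the inequality becomes an equality $t+(1-t)=1$; the two summands, bounded respectively by $Mt$ and $M(1-t)$, must then both be saturated, so $\int_\Omega|A(\phi)|=Mt$ and the normalized profile $\phi/\sqrt t$ is a maximizer, establishing existence. The whole dichotomy is thus pinned down by the elementary inequality $t^p+(1-t)^p\le1$, strict exactly when $p>1$, which is the analytic reason the strong-convergence statement is confined to that range.
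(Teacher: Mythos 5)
Your overall architecture (dislocation by time--frequency shifts, vanishing-vs-concentration dichotomy, decoupling, then the elementary inequality $t^p+(1-t)^p\le 1$) matches the paper's strategy, but two steps that you treat as routine are precisely where the real work lies, and as written both are broken. First, the ``cocompactness'' lemma: interpolating between $\|A(g)\|_{L^\infty}\le\|g\|_{L^2}^2$ and Moyal's identity $\|A(g)\|_{L^2(\rdd)}=\|g\|_{L^2}^2$ gives nothing, since both endpoints equal $\|g\|_{L^2}^2$ and neither decays under $D$-weak convergence. Worse, the global statement is false: for normalized dilated Gaussians $g_\lambda(x)=\lambda^{-d/2}g(x/\lambda)$ one has $\|g_\lambda\|_{M^\infty}\to 0$ (so $g_\lambda\to 0$ weakly modulo every time--frequency shift) while $\|A(g_\lambda)\|_{L^q(\rdd)}^q=(2/q)^d$ for all $\lambda$. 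What is true, and what the argument needs, is the local version on the finite-measure set $\Omega$: $\|A(w)\|_{L^p(\Omega)}\lesssim_\Omega\|w\|_{M^\infty}^{\theta_p}$ for bounded $w$. This does not follow from soft interpolation; the paper obtains it from the Wiener-amalgam bound $\|A(f,g)\|_{\mathcal{W}(L^2,L^\infty)}\lesssim\|f\|_{L^2}\|g\|_{M^\infty}$ of Cordero--Nicola, which is the key analytic input and is absent from your proposal.

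Second, the decoupling identity you state --- and candidly defer --- is not merely unproven; as an asymptotic equality it is false for $p\ne 2$, and for $p>2$ it even fails in the direction your argument requires. Since $A(\pi(z)\psi)=M_{Jz}A(\psi)$, the model case is $h^{(n)}=\pi(z_n)\psi$ with $|z_n|\to\infty$, where $A(g^{(n)})\approx A(\phi)+M_{Jz_n}A(\psi)$ and
\begin{equation*}
\int_\Omega\big|A(\phi)+M_{Jz_n}A(\psi)\big|^p\,dz\ \longrightarrow\ \int_\Omega\frac{1}{2\pi}\int_0^{2\pi}\big|A(\phi)+e^{i\theta}A(\psi)\big|^p\,d\theta\,dz,
\end{equation*}
which for $p=4$ equals $\int_\Omega(|A\phi|^4+|A\psi|^4+4|A\phi|^2|A\psi|^2)$, strictly larger than the sum of the two $L^4$ masses. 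So the inequality $M^p\le M^p(t^p+(1-t)^p)$ cannot be reached this way for $p>2$. The paper circumvents this by (i) using the full multi-profile decomposition, so that each summand is a modulation $M_{Jz^{(n)}_j}A(f_j)$ of a \emph{fixed} function with asymptotically separated frequencies, and (ii) proving, via a finite-dimensional Riesz--Thorin argument between the exact $L^2$ orthogonality (Riemann--Lebesgue) and the trivial $L^1,L^\infty$ triangle inequalities, the weaker decoupling $\limsup_n\|\sum_j M_{Jz^{(n)}_j}A(f_j)\|_{L^p(\Omega)}\le(\sum_j\|A(f_j)\|_{L^p(\Omega)}^{p^*})^{1/p^*}$ with $p^*=\min\{p,p'\}$; this is not additivity, but since $(\sum_j\|f_j\|_{L^2}^{2p^*})^{1/p^*}\le\sum_j\|f_j\|_{L^2}^2\le 1$ it still closes the chain of inequalities and yields both existence and, for $p>1$, the compactness of maximizing sequences. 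Your single-bubble splitting, with the remainder $h^{(n)}$ treated as one block, cannot be repaired without essentially reinstating this multi-profile structure.
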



The optimization objective \eqref{uno} is invariant under time-frequency shifts, since $|A(\pi(x,\omega)f)|=|A(f)|$ for $x,\omega\in\rd$, $f\in L^2(\rd)$. The first step towards Theorem \ref{mainthm} is to account for such symmetries. At the outset, our proof is based on a \textit{concentration compactness} strategy \cite{brenir,uhl,struwe,lions2, lions1,tintarev_book}, where the time-frequency shifts $\{\pi(x,\omega)$: $x,\omega\in \rd\}$ serve as dislocation operators. The corresponding profile decompositions of maximizing sequences are then leveraged by means of certain integral estimates for the
ambiguity function from \cite{cordero_nicola_2018} --- \red{expressing continuity at an intermediate level between the ``dislocation topology" and the weak topology} --- and an asymptotic decoupling property in $L^p(\Omega)$ for sums of functions asymptotically separated \textit{in the Fourier domain}. The latter can be aptly regarded as an asymptotic version of a known almost-orthogonality principle, cf. \cite[Lemma 6.1]{tao_vargas}. Our method yields not only the existence of optimizers, but, for $1<p<\infty$, also implies that every normalized maximizing sequence is relatively compact in $L^2(\rd)$, up to time-frequency shifts. This stronger conclusion is consistent with numerical practices that
seek to optimize \eqref{uno} by fixing a time-frequency center of gravity
\cite{Ricaud2014}.

The attainability of \eqref{uno} in the whole time-frequency space ($\Omega=\rdd$) was studied in the celebrated article \cite{lieb_opt}, under the assumption $p\geq 2$ --- which is a necessary restriction in that case --- and with very different techniques, \red{in particular exploiting the explicit expression of the candidate maximizers. Indeed}, the value of \eqref{uno} was exactly calculated and maximizers were characterized as Gaussian functions. For domains $\Omega$ with a special geometry, such as a ball, a similar characterization could be expected. Theorem \ref{mainthm} is a first step in that direction, as it implies that maximizers exist and therefore satisfy a certain variational equation. The analysis of such equation is however challenging and we postpone it to a subsequent contribution (in preparation) --- cf.\ \cite{nicola_tilli} for a related problem. 

We stress that the conclusion of Theorem \ref{mainthm} does not extend to the case $p=\infty$. Instead, we have the following characterization.
\begin{proposition}\label{pro counter 2} Let $\Omega\subset\rdd$ be a measurable subset of finite, positive measure. Then
\begin{equation}\label{eq count 1} 
\sup_{f\in L^2(\rd)\setminus\{0\}}\frac{\|A(f)\|_{L^\infty(\Omega)}}{\|f\|_{L^2}^2}=1
\end{equation}
and the supremum is attained if and only if  $|\Omega \cap B_r|>0$ for every $r>0$, where $B_r=\{z\in\rdd: |z|<r\}$. In this case, every $f\in L^2(\rd)\setminus\{0\}$ is a maximizer.
\end{proposition}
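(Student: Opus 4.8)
The plan is to prove Proposition~\ref{pro counter 2} in two parts: first establish the value of the supremum~\eqref{eq count 1}, and then characterize exactly when it is attained.

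**Part 1 (the supremum equals 1).** First I would recall the pointwise bound $|A(f)(z)| \le \|f\|_{L^2}^2$ for all $z \in \rdd$, with equality at $z=0$ since $A(f)(0,0) = \int |f(t)|^2\, dt = \|f\|_{L^2}^2$. This follows immediately from Cauchy--Schwarz applied to the defining integral of $A(f)$. Consequently $\|A(f)\|_{L^\infty(\Omega)} \le \|f\|_{L^2}^2$, so the supremum in~\eqref{eq count 1} is at most $1$. For the reverse inequality I would exploit the time-frequency invariance $|A(\pi(z)f)| = |A(f)(\cdot - z)|$ (translation of the ambiguity function): given any $z_0 \in \rdd$ that is a point of positive density for $\Omega$, one can shift the global maximum of $|A(f)|$ near $z_0$ and use that $|A(f)|$ is continuous with $|A(f)(0)| = \|f\|^2$. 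Taking $f$ with an ambiguity function increasingly concentrated near its peak (e.g. a dilated Gaussian, for which $A(f)$ is an explicit Gaussian), one drives $\esssup_\Omega |A(f)|/\|f\|^2$ arbitrarily close to $1$, yielding the supremum is exactly $1$.

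**Part 2 (characterization of attainment).** The essential supremum over $\Omega$ reaches the value $1$ iff $|A(f)|$ attains its global maximum $\|f\|^2$ on a subset of $\Omega$ of positive measure. By the equality case of Cauchy--Schwarz, $|A(f)(z)| = \|f\|^2$ holds exactly when $f(\cdot + x/2)$ and $e^{2\pi i (\cdot)\cdot\omega} f(\cdot - x/2)$ are parallel in $L^2$; I would verify that for $f \ne 0$ this forces $z = (x,\omega) = 0$, i.e.\ the maximum is attained \emph{only} at the origin. (Intuitively, $|A(f)|$ has a strict global maximum at $0$: any nonzero shift strictly decreases the overlap.) Hence $\esssup_\Omega |A(f)| = 1 = \|f\|^2$ is achieved iff $0$ lies in the measure-theoretic support of $\Omega$, precisely the condition $|\Omega \cap B_r| > 0$ for every $r > 0$. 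Under this condition, \emph{every} nonzero $f$ satisfies $\esssup_\Omega|A(f)|/\|f\|^2 = 1$ because $|A(f)|$ is continuous and equals $\|f\|^2$ at $0$, so it stays arbitrarily close to $\|f\|^2$ on the positive-measure sets $\Omega\cap B_r$; conversely if $|\Omega\cap B_{r_0}|=0$ for some $r_0$, then the continuous function $|A(f)|$ is strictly below $\|f\|^2$ on the compact complement of $B_{r_0}$ intersected with any ball, and a compactness/decay argument (using $A(f) \in C_0$ when $f \in L^2$, by the Riemann--Lebesgue-type decay of the ambiguity function) shows $\esssup_\Omega|A(f)| < \|f\|^2$.

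The main obstacle I anticipate is the converse direction of the characterization: when $\Omega$ avoids a neighborhood of the origin, proving $\esssup_\Omega |A(f)| < \|f\|^2$ \emph{strictly} for every $f$ requires combining the strict-maximum-at-origin property with the vanishing of $A(f)$ at infinity. The strict inequality on any fixed annulus is clear from continuity and the uniqueness of the maximizer at $0$, but ruling out that the supremum is approached along a sequence escaping to infinity needs the fact that $A(f) \in C_0(\rdd)$, so that $|A(f)|$ is genuinely bounded away from $\|f\|^2$ on the whole of $\Omega \subset \rdd \setminus B_{r_0}$. Assembling these pieces into a clean "strictly less than $1$" estimate is the delicate point; everything else reduces to the Cauchy--Schwarz equality analysis and the elementary density condition on $\Omega$.
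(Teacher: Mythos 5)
Your Part 2 is essentially the paper's argument (strict maximum of $|A(f)|$ at the origin via the Cauchy--Schwarz equality case, continuity at $0$ for the ``if'' direction, and the combination of strictness with $A(f)\in C_0(\rdd)$ for the ``only if'' direction), and it is sound, apart from the throwaway claim that the essential supremum equals $1$ iff the maximum is \emph{attained} on a positive-measure subset of $\Omega$ --- that equivalence is false (the value $1$ is in fact never attained pointwise a.e.\ on $\Omega$, since $|A(f)|<\|f\|_{L^2}^2$ off the origin), but your actual argument does not rely on it.

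The genuine gap is in Part 1. You write $|A(\pi(z)f)|=|A(f)(\cdot-z)|$ and propose to ``shift the global maximum of $|A(f)|$ near $z_0$.'' This identity is wrong: a time-frequency shift of $f$ only \emph{modulates} the ambiguity function, $A(\pi(z)f)=M_{Jz}A(f)$ up to a phase, so $|A(\pi(z)f)|=|A(f)|$ pointwise. The peak of $|A(f)|$ is pinned at the origin for \emph{every} $f$ --- this invariance is precisely what makes the whole optimization problem nontrivial --- so no choice of $z$ moves the maximum into $\Omega$. Your fallback, ``an ambiguity function increasingly concentrated near its peak,'' points in the wrong direction as well: to make $|A(f)(z_0)|$ close to $\|f\|_{L^2}^2$ at a fixed $z_0\neq 0$ you need $|A(f)|$ to become \emph{flat} near the origin in the direction of $z_0$, and by Moyal's identity it cannot be flat in all $2d$ directions at once. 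The paper's fix is: pick a density point $(x_0,\omega_0)$ of $\Omega$, use the symplectic covariance $A(U_{\mathcal A}f)(z)=A(f)(\mathcal Az)$ and transitivity of $Sp(d,\R)$ on $\rdd\setminus\{0\}$ to reduce to $\omega_0=0$, and then take dilated Gaussians $f_\lambda$, for which $|A(f_\lambda)(x,\omega)|=e^{-\pi|x|^2/(2\lambda^2)-\pi\lambda^2|\omega|^2/2}\to 1$ at $(x_0,0)$ as $\lambda\to\infty$. Without the symplectic (or an equivalent chirp/dilation) reduction, the anisotropic Gaussian only reaches points on the coordinate hyperplane $\omega=0$, so your sketch as written does not yield the lower bound for a general $\Omega$.
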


The magnitude of the ambiguity function $|A(f)(x,\omega)|=|\langle f,\pi(x,\omega)f \rangle|$ is a \emph{time-frequency auto-correlation function}. To better appreciate the subtleties involved in its optimization, we show that a result similar to Theorem \ref{mainthm} fails \red{for} time or frequency correlations considered individually.
Indeed, denote the translation and modulation operators on $L^2(\rd)$ by 
\begin{equation}\label{eq_tm}
T_xf(t)=f(t-x),\quad  M_\omega f(t)=e^{2\pi i t\cdot\omega}f(t),\quad x,\omega\in \rd,\ f\in L^2(\rd),
\end{equation}
so that $\pi(x,\omega)=M_\omega T_x$. The following result is in stark contrast with Theorem \ref{mainthm}.
\begin{proposition}\label{pro counterexample}
Let $\Omega\subset\rd$ be a measurable subset of finite, positive measure, and $1\leq p<\infty$. Then
\begin{equation}\label{uno-bis}
    \sup_{f\in L^2(\rd)\setminus\{0\}}\frac{\lc\int_\Omega |\langle f,T_x f\rangle|^p dx\rc^{1/p}}{\|f\|^2_{L^2}}=|\Omega|^{1/p}
\end{equation}
  and the supremum  is not attained. 
\end{proposition}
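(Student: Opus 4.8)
The plan is to read the quantity $\langle f,T_xf\rangle$ as the autocorrelation $c_f(x):=\int_{\rd} f(t)\overline{f(t-x)}\,dt$ and to exploit its two elementary features: it is bounded by $\|f\|_{L^2}^2$ via Cauchy--Schwarz, and it is continuous in $x$ with $c_f(0)=\|f\|_{L^2}^2$. For the upper bound, normalize $\|f\|_{L^2}=1$; then $|c_f(x)|\le \|f\|_{L^2}\|T_xf\|_{L^2}=1$ for every $x$, so that $\int_\Omega|c_f(x)|^p\,dx\le|\Omega|$ and the ratio in \eqref{uno-bis} is at most $|\Omega|^{1/p}$.

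For the reverse inequality, that is, to show $|\Omega|^{1/p}$ is actually approached, I would spread out a fixed profile by dilation. Fix any $f\in L^2(\rd)$ with $\|f\|_{L^2}=1$ and set $f_\lambda(t)=\lambda^{d/2}f(\lambda t)$, so that $\|f_\lambda\|_{L^2}=1$. A change of variables gives $c_{f_\lambda}(x)=c_f(\lambda x)$, and since $c_f$ is continuous at the origin with $c_f(0)=1$ (by strong continuity of translations, $T_yf\to f$ in $L^2$ as $y\to0$, whence $\langle f,T_yf\rangle\to1$), we have $c_{f_\lambda}(x)\to1$ pointwise as $\lambda\to0^+$. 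Because $|c_{f_\lambda}(x)|\le1$ and $|\Omega|<\infty$, dominated convergence yields $\int_\Omega|c_{f_\lambda}(x)|^p\,dx\to|\Omega|$, proving that the supremum equals $|\Omega|^{1/p}$.

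It remains to rule out attainment, which is where the real content lies. Suppose a normalized $f$ were a maximizer. Then $\int_\Omega\bigl(1-|c_f(x)|^p\bigr)\,dx=0$ with a nonnegative integrand, forcing $|c_f(x)|=1$ for a.e.\ $x\in\Omega$. Since $|\Omega|>0$, I may select such an $x_0\neq0$. At $x_0$ the Cauchy--Schwarz inequality $|\langle f,T_{x_0}f\rangle|\le\|f\|_{L^2}\|T_{x_0}f\|_{L^2}$ is saturated, so $T_{x_0}f$ and $f$ are proportional: $f(t-x_0)=\theta f(t)$ for a.e.\ $t$, with $|\theta|=1$. Taking absolute values shows that $|f|$ is periodic with period $x_0\neq0$, which is incompatible with $0<\|f\|_{L^2}<\infty$; hence no maximizer exists.

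The main obstacle is precisely this non-attainment step: one must convert the integral equality into the pointwise saturation $|c_f(x_0)|=1$ at a genuinely nonzero $x_0$ (this is the only place where $|\Omega|>0$ is essential), and then turn the resulting equality case of Cauchy--Schwarz into the periodicity obstruction. Everything else reduces to a routine Cauchy--Schwarz bound combined with the spreading argument and dominated convergence.
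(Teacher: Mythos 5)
Your proof is correct, and its overall skeleton (Cauchy--Schwarz for the upper bound; a spreading sequence for the lower bound; equality in Cauchy--Schwarz at some $x_0\neq 0$ forcing $T_{x_0}f=\theta f$ and hence a contradiction) coincides with the paper's. Two steps are executed differently, both legitimately. For the lower bound, the paper takes normalized indicators of balls $B_\lambda$ with $\lambda\to\infty$, estimates $\langle \chi_\lambda,T_x\chi_\lambda\rangle\geq|B_{\lambda-M_K}|$ explicitly, and must exhaust $\Omega$ by compact subsets $K$ to control $|x|$ uniformly; you instead dilate an arbitrary normalized profile, use $c_{f_\lambda}(x)=c_f(\lambda x)\to c_f(0)=1$ by strong continuity of translations, and conclude by dominated convergence on all of $\Omega$ at once. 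Your version is slightly cleaner and subsumes the paper's (their normalized $\chi_\lambda$ is exactly a dilation of a fixed normalized indicator), at the cost of invoking continuity of the autocorrelation rather than a bare geometric inequality. For the final contradiction, the paper applies the Fourier transform to $T_{x_0}f=cf$ to get $e^{-2\pi i x_0\cdot\xi}\hat f(\xi)=c\hat f(\xi)$ and hence $\hat f=0$ a.e.; you observe instead that $|f|$ would be $x_0$-periodic, which is incompatible with $0<\|f\|_{L^2}<\infty$ (summing $\|f\|_{L^2}^2$ over translates of a fundamental slab makes this precise, and is worth a line in a written-up version). Both closing arguments are one-liners of equal strength.
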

Of course, a similar negative result holds true for the frequency correlation function $\langle f,M_\omega f\rangle= \langle \hat{f},T_\omega \hat{f}\rangle$. As $\langle f,T_x f\rangle=(f\ast f^\vee)(x)$, with $f^\vee(x)=\overline{f(-x)}$, \red{Proposition \ref{pro counterexample} could be rephrased as an optimization problem for positive definite functions and it is related, at least in spirit}, to the optimization of the constants in Young's inequality; see, e.g., \cite[Chapter 4]{lieb_book}. 

Optimization problems analogous to \eqref{uno} can be considered also for linear time-frequency representations, such as the \emph{short-time Fourier transform} $V_g f(x,\omega)=\langle f, \pi(x,\omega)g\rangle$, where $g\in L^2(\rd)\setminus\{0\}$ is a fixed window function. While the short-time Fourier transform is not intrinsically associated with the function $f$, as it requires the introduction of an additional parameter $g$, it is a popular tool in signal analysis, in part because it is mathematically simpler than the ambiguity function. For example, the existence of a maximizer for $\|V_g f\|_{L^p(\Omega)}$ is much easier to establish than Theorem \ref{mainthm}, because the introduction of the window function $g$ weakens the nonlinearity of the optimization objective, replacing the quadratic term $|A(f)|$ with the so-called \emph{cross-ambiguity} $|A(f,g)|=|V_g f|$. A proof of the existence of optimizers for $\|V_g f\|_{L^p(\Omega)}$ and a technical comparison to Theorem \ref{mainthm} is presented in Section \ref{sec fixed wind}. (The case
$p=2$ is straightforward, as it corresponds to the maximization of the eigenvalues of a so-called \emph{localization operator} \cite{daubechies,wong_book}; finer questions such as optimal domains \red{of prescribed measure} and characterization of extremizers for Gaussian windows are studied in \cite{nicola_tilli}.)

We also point out that different but related optimization problems have been considered in the literature over the years, such as maximizing the {\it integral} on a subset $\Omega\subset\rdd$ of time-frequency distributions in the Cohen class \cite{cohen} (as opposed to their $L^p$-norms); see for instance \cite{flandrin_opt,lieb_ost}. For this kind of optimization, we refer the reader to the comprehensive recent survey \cite{lerner2021integrating}, where deep connections with the spectral theory of pseudo-differential operators are discussed. 

Finally, we stress once again that $|A(f)(x,\omega)|=|\langle f,\pi(x,\omega)f \rangle|$, so that \eqref{mainthm} can be also regarded as an optimization problem for the diagonal matrix coefficients of the Schr\"odinger representation of the reduced Heisenberg group. This point of view encourages us to investigate for which other groups and unitary representations a similar property holds -- an interesting question that appears to be largely open at the time of writing. Indeed, the matrix coefficients encode the properties of the corresponding representation and their study has a well-established tradition \cite{bargmann,kunzestein,cowling,knapp,ehren}, focused on proving refined estimates on the whole group -- as opposite to a subset $\Omega$.     

The article is organized as follows. In Section \ref{sec notation} we provide brief background on time-frequency analysis and concentration compactness. Section \ref{sec proof mainthm} is devoted to the proof of Theorem \ref{mainthm}, whereas in Section \ref{sec proof propositions} we prove Propositions \ref{pro counter 2} and \ref{pro counterexample}. In Section \ref{sec fixed wind} we consider the optimization problem for $\|A(f,g)\|_{L^p(\Omega)}$ and discuss technical differences with respect to Theorem \ref{mainthm}. Finally, Section \ref{sec_var} provides two variants of Theorem \ref{mainthm}. There, we replace $L^2(\rd)$ by the \emph{modulation spaces} $M^q(\rd)$, $0<q<2$, which are (quasi-)Banach spaces defined by imposing certain integrability requirements to the short-time Fourier transform, \red{widely used in time-frequency analysis} \cite{benyimodulation}. More precisely, we incorporate modulation-space norms into the optimization objective \eqref{uno}, and also consider their often preferred discrete counterparts.

\section{Notation and preliminary results}\label{sec notation}

\subsection{General notation} The inner product in $L^2(\rd)$ is denoted by $\langle\cdot,\cdot\rangle$. The space of Schwartz functions in $\rd$ is denoted by $\mathcal{S}(\rd)$, while $\mathcal{S}'(\rd)$ stands for the space of temperate distributions. We write $A\lesssim B$ if $A\leq C B$ for some absolute constant $C>0$, whereas $A\lesssim_k B$ means that such a constant depends on the parameter $k$. The Lebesgue measure of a subset $E$ of $\rd$ (or $\rdd$) is denoted by $|E|$ while $\chi_E$ stands for its characteristic function.   

\subsection{Tools from time-frequency analysis}
We recall some definitions and facts from time-frequen\-cy analysis; see \cite{grochenig_book, folland, flandrin_old} for extensive background. The main objects are the time-frequency shifts \eqref{eq_tf}, which define a \emph{unitary projective representation} $z\mapsto\pi(z)$ of $\rdd$ on $L^2(\rd)$.
In particular, for all $z,z'\in\mathbb{R}^{2d}$,
\begin{align}\label{eq_uno}
\pi(z) \pi (z') &= \sigma(z,z') \pi(z+z'),
\\
\label{eq_dos}
\pi(z)^* &= \overline{\sigma(z,-z)} \pi(-z),
\end{align}
where $\sigma(z,z')$ is an adequate unimodular complex number, called \emph{cocycle}.

\subsubsection*{Function spaces}
We now fix a \emph{window function} $g\in \mathcal{S}(\rd)\setminus\{0\}$. Most definitions below depend (albeit non-essentially) on such choice. The short-time Fourier transform (STFT) of a temperate distribution $f\in \mathcal{S}'(\rd)$ is defined by
\begin{equation}\label{eq_stft}
V_gf(z)= \langle f,\pi(z) g\rangle,\quad z\in\rdd.
\end{equation}
By considering the $L^p$ norm of $V_g f$ in $\rdd$ one can naturally measure the time-frequency content of a distribution and introduce corresponding families of function spaces. For example, the \emph{modulation space} $M^\infty(\rd)$ consists of temperate distributions $f\in\mathcal{S}'(\rd)$ such that 
\[
\|f\|_{M^\infty}\coloneqq \sup_{z\in\rdd}|V_g f(z)|=\sup_{z\in\rdd}|\langle f, \pi(z)g\rangle|<\infty.
\]
Different choices of the window produce equivalent norms, and $L^2(\rd)\hookrightarrow M^\infty(\rd)$.  

The Wiener amalgam space $\mathcal{W}(L^2,L^\infty)$ consists of all measurable functions $f:\rd\to \C$ such that 
\[
\|f\|_{\mathcal{W}(L^2,L^\infty)}\coloneqq  \esssup_{y\in\rd}\|f \,T_y g\|_{L^2}<\infty,
\]
where $T_y$ is the translation \eqref{eq_tm}. Again, different windows give rise to equivalent norms and $L^2(\rd)\hookrightarrow \mathcal{W}(L^2,L^\infty)$. Notice that the reverse inclusion holds locally, namely
\begin{equation}\label{eq l2 embed}
    \| f\|_{L^2(K)}\lesssim_K \|f\|_{\mathcal{W}(L^2,L^\infty)},
\end{equation}
for every compact subset $K\subset\rd$. This follows immediately if the window $g$ is chosen so that $g=1$ in a sufficiently large ball, so that $f=f\,T_y g$ on $K$ for sufficiently small $y$.

\subsubsection*{Cross-ambiguity function}  The cross-ambiguity function 
of $f,g\in L^2(\rd)$ is
\[
A(f,g)(x,\omega)=\int_{\rd} f \lc t+\frac{x}{2}\rc \overline{g\lc t-\frac{x}{2}\rc} e^{-2\pi i t\cdot\omega}\, dt= e^{\pi i x\cdot \omega}V_g f(x,\omega), \qquad x,\omega\in\rd.
\]
Hence $A(f,f)=A(f)$ is the ambiguity function of $f$. It is easy to see that $A(f,g)$ is a continuous function in $\rdd$ and vanishes at infinity. Moreover, by \red{the Cauchy-Schwarz inequality},
\[
|A(f,g)|\leq \|f\|_{L^2}\|g\|_{L^2}. 
\]
The following estimate from \cite[Corollary 4.2]{cordero_nicola_2018} will play a crucial role:
\begin{equation}\label{eq est l2minfty}
    \|A(f,g)\|_{\mathcal{W}(L^2,L^\infty)}\lesssim \|f\|_{L^2}\|g\|_{M^\infty},
\end{equation}
where of course the space $\mathcal{W}(L^2,L^\infty)$ is understood in $\rdd$. While we only need \eqref{eq est l2minfty} for $f,g\in L^2(\rd)$, the formula is still valid for $f\in L^2(\rd)$ and $g\in M^\infty(\rd)$. (In that case, $A(f,g)$ is a priori defined only as a temperate distribution, and part of the content of \eqref{eq est l2minfty} is that $A(f,g)$ is in fact locally in $L^2$ when the right-hand side is finite). Thus, as in many other parts of the article, we are concerned with $L^2(\rd)$ equipped with the $M^\infty$ (norm) topology, but not with genuine distributions in $M^\infty(\rd)$.

We finally recall from \cite[Chapter 4]{folland} or \cite[Proposition 175 and Corollary 217]{degosson} the covariance property for the cross-ambiguity function: if $\mathcal{A}$ is a (real $2d\times 2d$) symplectic matrix we have 
\begin{equation}\label{covariance}
A(U_{\mathcal{A}}f,U_{\mathcal{A}}g)(z)=A(f,g)(\mathcal{A}z)\quad z\in\rdd,\ f,g\in L^2(\rd)
\end{equation}
for a suitable unitary operator $U_{\mathcal{A}}$ on $L^2(\rd)$ (called metaplectic operator). 

\subsection{Tools from concentraction compactness}\label{subsec conc comp}
 Concentration compactness is a general paradigm to study optimization problems when compactness arguments fail due to invariance under the action of a non-compact group (see e.g. \cite{lions1,lions2,tintarev_book,tao}). We recall some basic facts specialized to the (projective) representation given by the time-frequency shifts. The main conclusion is that any bounded sequence in $L^2(\rd)$ has a subsequence with a special profile decomposition.
\subsubsection*{Time-frequency shifts as dislocations in $L^2$}
It is easily checked that time-frequency shifts satisfy the following important \emph{dislocation property}:
\begin{displaytext}\label{eq_dis}
\!\!If $f\in L^2(\rd)$ and $z_n\in\C$ is a sequence with  $|z_n|\to+\infty$,\\then $\pi(z_n) f$ converges weakly to $0$ in $L^2(\rd)$.
\end{displaytext}
\!\!\noindent
The dislocation property allows us to apply the theory of concentration compactness, because it implies that the time-frequency shifts $\{\pi(z): z\in\rdd\}$ define a so-called \textit{dislocation set} of unitary operators in $L^2(\rd)$ (\cite[Definition 3.2]{tintarev_book}). Whereas we do not need to recall the general (technical) definition here, we observe that, according to \cite[Proposition 3.1]{tintarev_book}, it is sufficient to check that
\begin{displaytext}\label{eq_dis2}
\!\!If for some sequences $z_n,z'_n\in\rdd$, the sequence of operators $\pi(z'_n)^\ast\pi(z_n)$ \\ does not converge weakly to zero (as bounded operators on $L^2(\rd)$),\\ then it has a strongly convergent subsequence.
\end{displaytext}
To see that property \eqref{eq_dis2} holds, note first that, by \eqref{eq_uno} and \eqref{eq_dos}, $\pi(z'_n)^\ast\pi(z_n)=c(z_n,z'_n)\pi(z_n-z'_n)$, with $|c(z_n,z'_n)|=1$. Hence, if $\pi(z'_n)^\ast\pi(z_n)$, does not converge weakly to zero, by \eqref{eq_dis}, $|z_n-z'_n|$ does not tend to $+\infty$, and therefore has a convergent subsequence. By passing to a further subsequence, the phase factors $c(z_n,z'_n)$ will also converge, and the conclusion  follows from the strong continuity of the representation
$\rdd\ni z\mapsto \pi(z)$.
\subsubsection*{D-weak convergence}
Associated with the set of dislocations $\{\pi(z): z\in\rdd\}$, there is a corresponding notion of \emph{weak dislocation convergence} --- \textit{$D$-weak convergence} for short \cite[Definition 3.1]{tintarev_book}): a sequence $f_n$ in $L^2(\rd)$ $D$-weakly converges to $f\in L^2(\rd)$ if for every $g\in L^2(\rd)$:
\[
\sup_{z\in{\rdd}} |\langle f_n-f,\pi(z) g\rangle|\to 0.
\]
Letting $g$ be a window function for the short-time Fourier transform \eqref{eq_stft} one sees that $D$-weak convergence implies convergence in $M^\infty(\rd)$.

\subsubsection*{Profile decomposition}
The general theory of concentration compactness in Hilbert spaces (see e.g. \cite[Theorem 3.1 and its proof]{tintarev_book} or \cite[Theorem 4.5.3]{tao}) 
now yields the following: Let $f^{(n)}$ be a sequence in $L^2(\rd)$ with $\limsup_{n\to\infty}\|f^{(n)}\|_{L^2}\leq1$, then there exists a subsequence (that we still denote $f^{(n)}$) and \emph{profiles} $f_j\in L^2(\rd)$, $j=1,2,\ldots,$ such that the following \emph{profile decomposition} holds for $k=1,2,\ldots$:
\begin{equation}\label{due}
f^{(n)}=\sum_{j=1}^k \pi\big(z^{(n)}_j\big)f_j+w^{(n)}_k
\end{equation}
for suitable $z^{(n)}_j\in\rdd$, $w^{(n)}_k\in L^2(\rd)$, where
\begin{equation}\label{tre}
|z^{(n)}_j-z^{(n)}_{j'}|\to+\infty\quad\textrm{as } n\to\infty,\ \textrm{if}\ j\not=j' \mbox{ and }\ f_j\not=0,f_{j'}\not=0,
\end{equation}
\begin{equation}\label{quattro}
    \sum_{j=1}^k\|f_j\|^2_{L^2}+\limsup_{n\to\infty}\|w^{(n)}_k\|^2_{L^2}\leq 1,
\end{equation}
\begin{equation}\label{cinque}
    \lim_{k\to\infty}\limsup_{n\to\infty}\|w^{(n)}_k\|_{M^\infty}=0.
\end{equation}
Moreover,
\begin{equation}\label{cinqueprima}
\pi\big(z^{(n)}_j\big)^* w^{(n)}_k \to 0 \mbox{ weakly in }L^2,
\mbox{ as }n\to\infty, \mbox{ for each }k \geq 1\mbox{ and }1\leq j \leq k,
\end{equation}
and
\begin{equation}\label{eq agg}
\lim_{n\to\infty} \lV \sum_{j=1}^k \pi(z^{(n)}_j)f_j \rV^2_{L^2}=\sum_{j=1}^k\|f_j\|^2_{L^2}.
\end{equation}
\begin{remark}\label{rem_a}
As a consequence of the decomposition, we see that $\pi\big(z^{(n)}_k\big)^*f^{(n)}$ converges weakly to $f_k$ in $L^2$, as $n\to\infty$, for each $k \geq 1$.
\end{remark}
For simplicity, we have introduced the time-frequency profile decomposition as an application of the abstract theory of dislocation sets \cite{tintarev_book}. Alternatively, it would have also been possible to derive the decomposition from the theory of \emph{dislocation groups} \cite[\red{Section} 4.5.2]{tao}, by considering the \emph{reduced Weyl-Heisenberg group} $\{\lambda \pi(z): z \in \rdd, |\lambda|=1\}$ and by exploiting the compactness of the set of phase factors $\lambda$.

\section{Proof of the main result (Theorem \ref{mainthm})}\label{sec proof mainthm}
In the following lemmas we deal with  $k$-tuples \red{$h=(h_1,\ldots,h_k)$} of functions on a measure space endowed with a $\sigma$-finite measure and we use the notation $\ell^r(L^p)$ for the corresponding vector-valued norm:
\begin{align}\label{eq_F}
\|h\|_{\ell^r(L^p)}=\lc \sum_{j=1}^k \|h_j\|_{L^p}^r \rc^{1/r}.
\end{align}

The following result provides a version of the classical Riesz-Thorin interpolation theorem for linear operators defined only on some finite dimensional subspace of simple functions. 
\begin{lemma}\label{lemma one}
Let $N$ be a positive integer and let $E_1,\ldots E_N$ be disjoint measurable sets of finite measure. Let 
\[
X=X_{E_1,\ldots,E_N}={\rm span}_{\C}\{\chi_{E_1}, \ldots, \chi_{E_N}\}.
\]

Let $1\leq p_0,q_0,p_1,q_1,r_0,r_1,p,q,r\leq\infty$ and $0<\theta<1$, with $1/p=(1-\theta)/p_0+\theta/p_1$, $1/q=(1-\theta)/q_0+\theta/q_1$, $1/r=(1-\theta)/r_0+\theta/r_1$, $p,r<\infty$.

Let $T:X^k\to L^{q_0}+L^{q_1}$ be a linear operator and $M_0,M_1>0$ such that
\begin{equation}\label{eq lemma3.1 uno}
\|T h\|_{L^{q_0}}\leq M_0 \|h\|_{\ell^{r_0}(L^{p_0})}
\end{equation}
and 
\begin{equation}\label{eq lemma3.1 due}
\|T h\|_{L^{q_1}}\leq M_1 \|h\|_{\ell^{r_1}(L^{p_1})}
\end{equation}
for every $h\in X^k$. Then 
\begin{equation}\label{eq lemma3.1 tre}
\|T h\|_{L^q}\leq M_0^{1-\theta} M_1^\theta \|h\|_{\ell^{r}(L^{p})}
\end{equation}
for every $h\in X^k$. 
\end{lemma}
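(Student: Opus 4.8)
The plan is to run the complex-interpolation method of Riesz--Thorin and Stein, adapted to the mixed norm $\ell^r(L^p)$ on the domain. By the homogeneity of \eqref{eq lemma3.1 tre} we may assume $\|h\|_{\ell^r(L^p)}=1$. Since $q$ may be infinite, rather than working with $Th$ directly I would estimate the bilinear pairing $\int (Th)\,g\,d\mu$: for a $\sigma$-finite measure and any $1\le q\le\infty$ one has $\|Th\|_{L^q}=\sup\left|\int (Th)\,g\,d\mu\right|$, the supremum being taken over simple $g$ of finite-measure support with $\|g\|_{L^{q'}}\le 1$, where $q'$ is the conjugate exponent. Thus it suffices to prove $\left|\int (Th)\,g\,d\mu\right|\le M_0^{1-\theta}M_1^\theta$ for every such $g$; in particular this will also show a posteriori that $Th\in L^q$.

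The core of the argument is the construction of an entire family interpolating $h$ at $z=\theta$, with two nested layers matching the outer $\ell^r$ and inner $L^p$ structure. Write $h_j=\sum_i a_{ij}e^{i\alpha_{ij}}\chi_{E_i}$ with $a_{ij}\ge0$, and set $b_j=\|h_j\|_{L^p}$ (so that $\sum_j b_j^r=1$) and $u_{ij}=a_{ij}/b_j$ (so that $\sum_i u_{ij}^p|E_i|=1$) whenever $b_j>0$, putting $h_j(z)\equiv0$ if $b_j=0$. With the affine exponents $P(z)=p((1-z)/p_0+z/p_1)$ and $R(z)=r((1-z)/r_0+z/r_1)$ --- both finite precisely because $p,r<\infty$ --- I would define, summing only over indices with $a_{ij}>0$,
\[
h_j(z)=\sum_i b_j^{R(z)}\,u_{ij}^{P(z)}\,e^{i\alpha_{ij}}\,\chi_{E_i}.
\]
Since $P(\theta)=R(\theta)=1$ we recover $h(\theta)=h$. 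A direct computation shows that the two layers collapse correctly on the edges: on $\operatorname{Re}z=0$ one finds $\|h_j(it)\|_{L^{p_0}}=b_j^{r/r_0}$, hence $\|h(it)\|_{\ell^{r_0}(L^{p_0})}=1$, and symmetrically $\|h(1+it)\|_{\ell^{r_1}(L^{p_1})}=1$ on $\operatorname{Re}z=1$. On the dual side I would set $g_z=\sum_m e_m^{Q(z)}e^{i\beta_m}\chi_{B_m}$ with $Q(z)=q'((1-z)/q_0'+z/q_1')$, so that $g_\theta=g$ and $\|g_{it}\|_{L^{q_0'}}=\|g_{1+it}\|_{L^{q_1'}}=1$.

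With these families in hand I would put $\Phi(z)=\int (Th(z))\,g_z\,d\mu$. Expanding by linearity of $T$ gives a finite sum $\Phi(z)=\sum_{i,j,m} b_j^{R(z)}u_{ij}^{P(z)}e_m^{Q(z)}e^{i(\alpha_{ij}+\beta_m)}\int T\big(\chi_{E_i}^{(j)}\big)\,\chi_{B_m}\,d\mu$, where $\chi_{E_i}^{(j)}\in X^k$ is the tuple carrying $\chi_{E_i}$ in its $j$-th entry and $0$ elsewhere. Each scalar $\int T(\chi_{E_i}^{(j)})\,\chi_{B_m}\,d\mu$ is a fixed finite number (by Hölder, since $T(\chi_{E_i}^{(j)})\in L^{q_0}+L^{q_1}$ and $\chi_{B_m}\in L^{q_0'}\cap L^{q_1'}$), and the coefficients are entire with modulus bounded on the closed strip $0\le\operatorname{Re}z\le1$. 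Hence $\Phi$ is a finite combination of exponentials in $z$, so it is automatically entire and bounded on the strip --- this is exactly where the finite-dimensionality of $X$ pays off, removing the density and boundedness technicalities that usually accompany the method. On the two edges, Hölder together with the hypotheses \eqref{eq lemma3.1 uno}--\eqref{eq lemma3.1 due} and the edge normalizations above give $|\Phi(it)|\le\|Th(it)\|_{L^{q_0}}\|g_{it}\|_{L^{q_0'}}\le M_0$ and likewise $|\Phi(1+it)|\le M_1$. The Hadamard three-lines lemma then yields $|\Phi(\theta)|\le M_0^{1-\theta}M_1^\theta$, and since $\Phi(\theta)=\int (Th)\,g\,d\mu$, taking the supremum over $g$ gives \eqref{eq lemma3.1 tre}.

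I expect the main obstacle to be the bookkeeping of the two simultaneous interpolations --- the outer $\ell^r$ layer through $R(z)$ and the inner $L^p$ layer through $P(z)$ --- and verifying that their product reproduces exactly the endpoint norms $\|h(it)\|_{\ell^{r_0}(L^{p_0})}=1$ and $\|h(1+it)\|_{\ell^{r_1}(L^{p_1})}=1$; the remainder is the classical scheme. The only genuinely delicate points are the conventions at infinite endpoints (an exponent $p_0$, $p_1$ or $q_i'$ equal to $\infty$) and the degenerate case $q_0=q_1$ where $q'$ may itself be $\infty$; these are dispatched by the usual convention $a^0=1$ for $a>0$ and by replacing the corresponding analytic family with a constant one whenever an exponent does not actually vary.
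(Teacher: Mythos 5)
Your argument is correct and follows exactly the route the paper itself indicates: the paper's proof reduces the statement to interpolation in weighted sequence spaces (citing standard results from Bergh--L\"ofstr\"om) and explicitly remarks that, alternatively, the standard Riesz--Thorin proof can be carried out within the finite-dimensional space $X^k$ --- which is precisely what you do, with the two-layer analytic family $b_j^{R(z)}u_{ij}^{P(z)}$ correctly exploiting the disjointness of the $E_i$ to evaluate the edge norms. No gaps; your closing remarks on the infinite-endpoint conventions cover the only degenerate cases.
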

\begin{proof}
A function $h=(h_1,\ldots,h_k) \in X^k$ can be written uniquely as
\begin{align*}
h_j = \sum_{i=1}^N a^i_j \,\chi_{E_i}, 
\end{align*}
with $a^i_j \in \mathbb{C}$. In terms of the coefficients, the norm \eqref{eq_F} reads
\begin{align*}
\|h\|_{\ell^r(L^p)} = \Big( \sum_{j=1}^k \Big( \sum_{i=1}^N |a^i_j|^p
|E_i| \Big)^{r/p} \Big)^{1/r}.
\end{align*}
Therefore, the conclusion follows from standard interpolation results with respect to weighted sequences spaces; see, e.g., \cite[Theorems 5.1.1, 5.1.2, and 5.6.3]{bl76}. Alternatively, one can see that the standard proof of the Riesz-Thorin theorem can be carried out within the space $X^k$.
\end{proof}
\begin{remark}
Lemma \ref{lemma one} fails if the sets $E_1,\ldots,E_n$ are not disjoint. In particular, the conclusion does not generalize to all finite dimensional subspaces $X$ of simple functions. For example, let $k=1$ (scalar case), the measure space $\{0,1,2\}$ be endowed with the counting measure, $E_1=\{0,1\}$, $E_2=\{1,2\}$ and $X={\rm span}_{\C}\{\chi_{E_1},\chi_{E_2}\}$, $Th=(h(0)+h(1)+h(2))\chi_{\{0\}}$ for $h\in X$, $1\leq q_0,q_1,r_0,r_1\leq\infty$ arbitrary, $p_0=1$, $p_1=\infty$, $p=1/\theta=2$. Then, \eqref{eq lemma3.1 uno} holds with $M_0=1$ and \eqref{eq lemma3.1 due} holds with $M_1=2$, while the function
$h=\chi_{E_1}+\chi_{E_2}$ provides a counterexample to the corresponding estimate \eqref{eq lemma3.1 tre}.
\end{remark}
As an application of Lemma \ref{lemma one} we obtain the following asymptotic interpolation estimate for sequences of operators.

\begin{lemma}\label{lemma two}
With the same notation of Lemma \ref{lemma one}, let  $T_n:\ell^{r_0}(L^{p_0})+\ell^{r_1}(L^{p_1})\to L^{q_0}+L^{q_1}$, $n\in\mathbb{N}$, be a sequence of linear operators. 

Suppose that $T_n$ is bounded $\ell^{r_0}(L^{p_0})\to L^{q_0}$ for every $n\in\mathbb{N}$,  with
\begin{equation}\label{eq lemma limsup}
\limsup_{n\to\infty}\|T_n h\|_{L^{q_0}}\leq M_0 \|h\|_{\ell^{r_0}(L^{p_0})},
\end{equation}
and, for every $n\in\mathbb{N}$,
\begin{equation}\label{eq lemma limsup3}
\|T_n h\|_{L^{q_1}}\leq M_1 \|h\|_{\ell^{r_1}(L^{p_1})}.
\end{equation}
Then 
\begin{equation}\label{eq lemma limsup2}
\limsup_{n\to\infty}\|T_n h\|_{L^q}\leq M_0^{1-\theta} M_1^\theta \|h\|_{\ell^{r}(L^{p})}.
\end{equation}
\end{lemma}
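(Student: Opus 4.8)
The plan is to apply Lemma \ref{lemma one} to each operator $T_n$ \emph{individually}, with the fixed constant $M_0$ on the $L^{q_0}$ side replaced by the genuine operator norm of $T_n$ on $X^k$, and only afterwards to pass to the limit in $n$. Concretely, for each $n$ set
\[
M_0^{(n)}=\sup\{\|T_n h\|_{L^{q_0}}:\ h\in X^k,\ \|h\|_{\ell^{r_0}(L^{p_0})}\leq 1\},
\]
the norm of the restriction $T_n|_{X^k}\colon \ell^{r_0}(L^{p_0})\to L^{q_0}$. By construction $\|T_n h\|_{L^{q_0}}\leq M_0^{(n)}\|h\|_{\ell^{r_0}(L^{p_0})}$ for all $h\in X^k$, so together with the uniform bound \eqref{eq lemma limsup3} the hypotheses \eqref{eq lemma3.1 uno}--\eqref{eq lemma3.1 due} of Lemma \ref{lemma one} are met for $T_n$. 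Lemma \ref{lemma one} then yields, for every $h\in X^k$,
\[
\|T_n h\|_{L^q}\leq (M_0^{(n)})^{1-\theta}M_1^\theta\,\|h\|_{\ell^{r}(L^{p})}.
\]
Taking $\limsup_{n\to\infty}$ and using the monotone continuity of $t\mapsto t^{1-\theta}$, the claim \eqref{eq lemma limsup2} reduces to the single inequality $\limsup_{n\to\infty}M_0^{(n)}\leq M_0$.

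Proving that last inequality is the crux, and the main obstacle is that the hypothesis \eqref{eq lemma limsup} only gives a \emph{pointwise} (in $h$) control of the $\limsup$, whereas $M_0^{(n)}$ is a supremum over $h$; in general a pointwise $\limsup$ bound does not upgrade to a bound on the suprema (a moving, narrowing bump is the usual obstruction). Here this is rescued by the finite dimensionality of $X^k$. First I would establish that $\sup_n M_0^{(n)}<\infty$: applying \eqref{eq lemma limsup} to each element of a fixed finite basis of $X^k$ shows that the seminorms $h\mapsto\|T_n h\|_{L^{q_0}}$ are bounded on the basis uniformly in $n$, and hence, by linearity and the equivalence of all norms on the finite-dimensional $X^k$, uniformly bounded on the unit ball.

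With this uniform boundedness in hand, I would argue by contradiction: if $\limsup_n M_0^{(n)}=L>M_0$, pass to a subsequence along which $M_0^{(n)}\to L$ and choose near-optimal unit vectors $h_n\in X^k$ with $\|T_n h_n\|_{L^{q_0}}\geq M_0^{(n)}-1/n$. Since the unit sphere of $X^k$ is compact, after a further subsequence $h_n\to h_\ast$ with $\|h_\ast\|_{\ell^{r_0}(L^{p_0})}=1$. As $h\mapsto\|T_n h\|_{L^{q_0}}$ is a seminorm, the triangle inequality gives $\big|\,\|T_n h_n\|_{L^{q_0}}-\|T_n h_\ast\|_{L^{q_0}}\,\big|\leq M_0^{(n)}\|h_n-h_\ast\|_{\ell^{r_0}(L^{p_0})}\to 0$, where the uniform bound on $M_0^{(n)}$ is precisely what forces this product to vanish. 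Hence $\|T_n h_\ast\|_{L^{q_0}}\to L$ along the subsequence, contradicting $\limsup_n\|T_n h_\ast\|_{L^{q_0}}\leq M_0\|h_\ast\|_{\ell^{r_0}(L^{p_0})}=M_0<L$ from \eqref{eq lemma limsup}. This gives $\limsup_n M_0^{(n)}\leq M_0$ and completes the argument. The two points deserving care are the uniform boundedness step (without it the crucial product above need not vanish) and the systematic use of compactness of the sphere in the finite-dimensional space $X^k$; everything else is a bookkeeping passage through Lemma \ref{lemma one}.
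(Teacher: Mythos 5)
Your finite-dimensional core is sound and, in essence, the same mechanism the paper uses: you upgrade the pointwise bound \eqref{eq lemma limsup} to a bound that is uniform over the unit sphere of $X^k$ by combining a uniform bound on the restricted operator norms $M_0^{(n)}$ (which you correctly extract from a basis plus equivalence of norms in finite dimensions, where the paper instead invokes the uniform boundedness principle) with compactness of that sphere; the paper packages the same idea as equicontinuity of $\{T_n\}$ making the limsup estimate uniform on compact sets, while you run a sequential-compactness contradiction. Your reduction to $\limsup_n M_0^{(n)}\leq M_0$ and the contradiction argument are both correct.

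The gap is that you stop at $h\in X^k$. Everything you prove concerns tuples of simple functions built on one fixed finite disjoint family $E_1,\dots,E_N$, whereas the conclusion \eqref{eq lemma limsup2} is asserted for arbitrary $h\in\ell^r(L^p)$ --- and that generality is exactly what is needed when the lemma is applied in Step 2 of the proof of Theorem \ref{mainthm}, with $h_j=A(f_j)$ on $\Omega$, which are not simple functions. The paper closes this with a final density step: since $p<\infty$ (a hypothesis you never use, which is a warning sign), simple functions are dense in $L^p$, and the family $T_n:\ell^{r}(L^{p})\to L^q$ is equicontinuous --- a uniform bound on the $\ell^{r_0}(L^{p_0})\to L^{q_0}$ norms from the uniform boundedness principle on the full space, interpolated against \eqref{eq lemma limsup3} --- so the asymptotic estimate passes from the dense class of simple tuples to all of $\ell^r(L^p)$ by a standard three-epsilon argument. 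You need to add this step (and, in particular, a uniform-in-$n$ bound on $\|T_n\|_{\ell^r(L^p)\to L^q}$, which does not follow from your $X^k$-restricted bounds alone) for the proof to be complete.
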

\begin{proof}
First, we prove that \eqref{eq lemma limsup2} holds when $h$ is a $k$-tuple of simple functions; hence $h$ belongs to some space $X^k$, with $X=X_{E_1,\ldots,E_N}$ and $E_1,\ldots,E_N$ of finite measure and  pairwise disjoint, as in Lemma \ref{lemma one}. 

The family of operators $T_n: \ell^{r_0}(L^{p_0})\to L^{q_0}$ is equicontinuous by the uniform boundedness principle, hence the estimate \eqref{eq lemma limsup} holds uniformly with respect to $h$ when $h$ belongs to a compact subset of $\ell^{r_0}(L^{p_0})$. Precisely, for every $\epsilon>0$ there exists $n_0\in\mathbb{N}$ such that   \[
\|T_n h\|_{L^{q_0}}\leq M_0 \|h\|_{\ell^{r_0}(L^{p_0})}+\epsilon
\]
for every $n\geq n_0$ and every $h$ in a compact subset of $\ell^{r_0}(L^{p_0})$. In particular, this holds for functions $h$ normalized in $\ell^{r_0}(L^{p_0})$ and in the finite dimensional space $X^k$, with $X=X_{E_1,\ldots E_N}$ as above. By homogeneity, we deduce that for every $\epsilon>0$ there exists $n_0\in\mathbb{N}$ such that
\[
\|T_n h\|_{L^{q_0}}\leq (M_0+\epsilon) \|h\|_{\ell^{r_0}(L^{p_0})}
\]
for $h\in X^k$, $n\geq n_0$. 
By \eqref{eq lemma limsup3} and Lemma \ref{lemma one} we obtain
\[
\|T_n h\|_{L^q}\leq (M_0+\epsilon)^{1-\theta} M_1^\theta \|h\|_{\ell^{r}(L^{p})}
\]
for $h\in X^k$, $n\geq n_0$,
which implies \eqref{eq lemma limsup2} for $h \in X^k$. 

Since $p$ is assumed to be finite, the set of simple functions is dense in $L^p$ and the family of operators $T_n:\ell^{r}(L^{p})\to L^q$ is equicontinuous (by the assumptions and complex interpolation), so that \eqref{eq lemma limsup2} holds for every $h\in \ell^{r}(L^{p})$.
\end{proof}
\begin{remark}
It is is easy to see that the conclusion of Lemma \ref{lemma two} still holds if \eqref{eq lemma limsup3} is replaced by the assumption that  $T_n$ is bounded $\ell^{r_1}(L^{p_1})\to L^{q_1}$ for every $n\in\mathbb{N}$,  with
\[
\limsup_{n\to\infty}\|T_n h\|_{L^{q_1}}\leq M_1 \|h\|_{\ell^{r_1}(L^{p_1})},
\]
but we will not need this fact.
\end{remark}

We are now ready to prove our main result. 
\begin{proof}[Proof of Theorem \ref{mainthm}]
\noindent {\bf Step 1.} \emph{Profile decomposition}.
	
Let $L$ be the supremum in \eqref{uno}. 
Since $\Omega$ has positive measure, we have $L>0$. (Indeed, it is easy to see that there exists $f \in L^2(\mathbb{R}^d)$ such that  \red{$A(f) \not = 0$ on $\Omega$}; see, for example, the proof of Proposition \ref{pro counter 2} below for details.)
\red{Moreover,} from the pointwise estimate $|Af|\leq \|f\|^2_{L^2}$ we obtain $L\leq |\Omega|^{1/p}$, hence $L$ is finite.

Let  $f^{(n)}$ be a maximizing sequence, that we can assume normalized without loss of generality, that is $\|f^{(n)}\|_{L^2}=1$. After passing to a subsequence, we apply the profile decomposition described in Section \ref{subsec conc comp}. We use the notation introduced there and we also set $F^{(n)}_k\coloneqq \sum_{j=1}^k \pi\big(z^{(n)}_j\big)f_j$, so that $f^{(n)}=F^{(n)}_k+ w^{(n)}_k$. Observe for future reference that by \eqref{quattro} and \eqref{eq agg} we have 
\begin{equation}\label{eq agg bis}
\|F^{(n)}_k\|^2_{L^2}+\|w^{(n)}_k\|^2_{L^2}\leq 2,\quad n\geq n_k,
\end{equation}
for some $n_k\in\bN$ depending on $k$. 

Using the sesquilinearity of the cross-ambiguity distribution, we can write
\begin{align}\label{sei}
    A(f^{(n)})= \sum_{j=1}^k A(\pi(z^{(n)}_j)f_j)&+\sum_{\substack{1\leq j,j'\leq k \\ j\not=j'}} A(\pi(z^{(n)}_j)f_j,\pi(z^{(n)}_{j'})f_{j'})\\
    &+A(F^{(n)}_k,w^{(n)}_k)+ A(w^{(n)}_k,F^{(n)}_k)+A(w^{(n)}_k).\nonumber
\end{align}
We now study the asymptotic behavior, as $n\to\infty$, of the $L^p(\Omega)$-norm of the terms on the right-hand side of \eqref{sei}.

\noindent {\bf Step 2.} \emph{Asymptotic decoupling}.
A simple computation gives  
\begin{equation}\label{eq cov amb}
A(\pi(z^{(n)}_j)f_j)=M_{J z^{(n)}_j}A(f_j)
\end{equation}
where 
$J=\begin{pmatrix} 0& I\\-I&0 \end{pmatrix}$ is the canonical symplectic matrix and $M$ stands for the modulation operator (here in $\rdd$).

\red{For fixed $k\geq 1$ and $n\in\bN$}, consider the operator
\begin{align*}
T^{(n)}_k: L^2(\Omega)^k \to L^2(\Omega),
\qquad
T^{(n)}_k(h_1, \ldots, h_k) = \sum_{j=1}^k M_{J z^{(n)}_j} h_j,
\end{align*}
\red{where the $h_j$'s are understood extended by zero on $\rdd\setminus\Omega$.}
By \eqref{tre} and the Riemann-Lebesgue lemma
\begin{align}\label{eq_a}
\begin{aligned}
\lim_{n\to\infty} \lV 
T^{(n)}_k(h_1, \ldots, h_k) \rV_{L^2(\Omega)}^2
&=\lim_{n\to\infty}
\sum_{j,j'=1}^k \langle h_j \overline{h_{j'}}, M_{J \left(z^{(n)}_j-z^{(n)}_{j'}\right)} \chi_\Omega \rangle_{L^2(\rdd)}
\\
&=\sum_{j=1}^k \|h_j\|^2_{L^2(\Omega)}.
\end{aligned}
\end{align}
In addition, for any $1 \leq p_1 \leq \infty$,
\begin{align}\label{eq_b}
\lV T^{(n)}_k(h_1, \ldots, h_k)\rV_{L^{p_1}(\Omega)}=
\lV\sum_{j=1}^k M_{J z^{(n)}_j}h_j\rV_{L^{p_1}(\Omega)}\leq \sum_{j=1}^k \|h_j\|_{L^{p_1}(\Omega)}.
\end{align}
We now use Lemma \ref{lemma two} to interpolate
between \eqref{eq_b} with $p_1=1,\infty$ and \eqref{eq_a}, and obtain
\begin{align*}
\limsup_{n\to\infty}\lV\sum_{j=1}^k M_{J z^{(n)}_j}h_j\rV_{L^p(\Omega)}\leq \lc \sum_{j=1}^k \|h_j\|^{p^\ast}_{L^p(\Omega)}\rc^{1/p^\ast},
\end{align*}
with \[p^\ast=\min\{p,p'\}.\] Applying this estimate to $h_j=A(f_j)$ and combining it with \eqref{eq cov amb} we obtain the following \textit{asymptotic decoupling estimate}:
\begin{equation}\label{eq decoupling}  \limsup_{n\to\infty}\lV\sum_{j=1}^k A(\pi(z^{(n)}_j)f_j)\rV_{L^p(\Omega)}\leq \lc \sum_{j=1}^k \|A(f_j)\|^{p^\ast}_{L^p(\Omega)}\rc^{1/p^\ast}.
\end{equation}

\noindent {\bf Step 3.} \emph{The error terms in \eqref{sei}}.

Proceeding with the analysis of the other terms in \eqref{sei}, we have
\begin{equation}\label{otto}
    |A(\pi(z^{(n)}_j)f_j,\pi(z^{(n)}_{j'})f_{j'})|=|A(f_j,\pi(z^{(n)}_{j'}-z^{(n)}_j)f_{j'})|.
\end{equation}
Since $A(f,g)\to 0$ at infinity if $f,g\in L^2(\rd)$, we see by \eqref{tre} and the dominated convergence theorem (the expression in \eqref{otto} is $\leq 1$ in $\rdd$), that 
\begin{equation}\label{nove}
\|A(\pi(z^{(n)}_j)f_j,\pi(z^{(n)}_{j'})f_{j'})\|_{L^p(\Omega)}\to 0 \quad \text{as } n\to\infty,
\qquad (j\not=j').
\end{equation}

Second, concerning the term $A(F^{(n)}_k,w^{(n)}_k)$ in \eqref{sei}, in view of the embedding \eqref{eq l2 embed} and the estimate \eqref{eq est l2minfty} we deduce that, for every compact $K\subset\rdd$,
\begin{align*}
\| A(F^{(n)}_k,w^{(n)}_k)\|_{L^2(K)}&\lesssim_K
\| A(F^{(n)}_k,w^{(n)}_k)\|_{\mathcal{W}(L^2,L^\infty)}\\
&\lesssim \|F^{(n)}_k\|_{L^2}\|w^{(n)}_k\|_{M^\infty},
\end{align*}
where the implied constant is independent of $k,n$. Using \eqref{eq agg bis} we see that, for $n\geq n_k$,
\[
\| A(F^{(n)}_k,w^{(n)}_k)\|_{L^2(K)}\lesssim_K  \|w^{(n)}_k\|_{M^\infty}
\]
and
\begin{equation}\label{dieci}
\| A(F^{(n)}_k,w^{(n)}_k)\|_{L^\infty(\rdd)}\leq \| F^{(n)}_k\|_{L^2} \|w^{(n)}_k\|_{L^2} \leq 1.
\end{equation}
Hence, by H\"older's inequality, we conclude that, for $n\geq n_k$,
\[
\| A(F^{(n)}_k,w^{(n)}_k)\|_{L^p(K)}\lesssim_K \|w^{(n)}_k\|_{M^\infty}^{\theta_p},
\]
where $\theta_p=1$ for $1\leq p\leq 2$, and $\theta_p=2/p$ if $2<p<\infty$. 
We claim that this  implies 
\begin{equation}\label{undici}
    \lim_{k\to\infty}\limsup_{n\to\infty}
    \| A(F^{(n)}_k,w^{(n)}_k)\|_{L^p(\Omega)}=0.
\end{equation}
Indeed, this is clear from \eqref{cinque} if $\Omega$ is compact. When $\Omega$ is merely a measurable set with finite measure we can choose a compact subset $K\subset\Omega$ with $|\Omega\setminus K|$ arbitrarily small and use again the uniform bound \eqref{dieci} on $\Omega\setminus K$. 

The same argument shows that
\begin{equation}
    \label{dodici}
    \lim_{k\to\infty}\limsup_{n\to\infty}
    \| A(w^{(n)}_k,F^{(n)}_k)\|_{L^p(\Omega)}=0,
\end{equation}
and 
\begin{equation}
    \label{tredici}
    \lim_{k\to\infty}\limsup_{n\to\infty}
    \| A(w^{(n)}_k)\|_{L^p(\Omega)}=0.
\end{equation}

\noindent {\bf Step 4.} \emph{Existence of optimizers}.

By the very definition of $f^{(n)}$, \eqref{sei}, the triangle inequality and \eqref{eq decoupling}, \eqref{nove}, \eqref{undici}, \eqref{dodici}, \eqref{tredici}, letting first $n\to\infty$ and then $k\to\infty$, we obtain
\[
L=\lim_{n\to\infty} \|A(f^{(n)})\|_{L^p(\Omega)}\leq \lc \sum_{j=1}^\infty \|A(f_j)\|^{p^\ast}_{L^p(\Omega)}\rc^{1/p^\ast}.
\]
On the other hand, by the very definition of $L$, we have 
\begin{equation}\label{quattordici}
\|A(f_j)\|_{L^p(\Omega)}\leq L \|f_j\|_{L^2}^2
\end{equation}
so that from \eqref{quattro} we obtain
\begin{align*}
L=\lim_{n\to\infty} \|A(f^{(n)})\|_{L^p(\Omega)}&\leq \lc \sum_{j=1}^\infty \|A(f_j)\|^{p^\ast}_{L^p(\Omega)}\rc^{1/p^\ast}\\
&\leq L\lc\sum_{j=1}^\infty \|f_j\|^{2p^\ast}_{L^2}\rc^{1/p^\ast}
\leq L \sum_{j=1}^\infty \|f_j\|^{2}_{L^2}\leq L.
\end{align*}
We then see that all these inequalities must be equalities. If $p>1$, so that $p^\ast>1$, this is possible only if $f_j=0$ except for one $j$, say $j=1$, and $\|f_1\|_{L^2}=1$. Hence $f_1$ is a maximizer.

Moreover, concerning the claim in the statement for $1<p<\infty$,
from \eqref{due} we have $f^{(n)}=\pi(z^{(n)}_1) f_1+w^{(n)}_1$. Since, by Remark \ref{rem_a}, $\pi\big(z^{(n)}_1\big)^* f^{(n)}$ converges weakly to $f_1$ in $L^2$, and $\|f^{(n)}\|_{L^2}=\|f_1\|_{L^2}=1$, we deduce that $\pi\big(z^{(n)}_1\big)^* f^{(n)}$ converges to $f_1$ in $L^2$. 
Finally, by passing to a subsequence, the cocycles in \eqref{eq_dos}
can be assumed to converge, $\sigma(z^{(n)},-z^{(n)}) \to c$,
and therefore $\pi\big(-z^{(n)}_1\big) f^{(n)}$ converges to the maximizer $c f_1$. 

If $p=1$, then $p^\ast=1$, and the same chain of equalities implies, together with \eqref{quattordici}, that equality holds in \eqref{quattordici} for all $j$. Since $f_j\not=0$ for at least one $j$, such a $f_j$ will be a maximizer for the problem \eqref{uno}. The proof is then  concluded. 
\end{proof}

\begin{remark} If one is only interested in the existence of a maximizer, even in the case $p>1$, the conclusion would follow as in the case $p=1$, that is, by applying the triangle inequality to $\sum_{j=1}^k A(\pi(z^{(n)}_j)f_j)$ - hence ignoring the oscillations and without using Lemma \ref{lemma two}. The more elaborated argument given above is rewarded with the stronger conclusion for $1<p<\infty$.
\end{remark}

\section{Proofs of Propositions \ref{pro counter 2} and \ref{pro counterexample}}\label{sec proof propositions}
\begin{proof}[Proof of Proposition \ref{pro counter 2}]
 It is clear that $\|A(f)\|_{L^\infty(\Omega)}\leq \|f\|_{L^2}^2$. On the other hand, consider a point $(x_0,\omega_0)\in\Omega$ of positive Lebesgue density for $\Omega$ (which exists since $|\Omega|>0$). Using the covariance property of $A(f)$ under symplectic transformations recalled in \eqref{covariance} and the transitivity of the linear symplectic group on $\rdd\setminus\{0\}$, we can suppose $\omega_0=0$. Let $f(x)=2^{d/4} e^{-\pi|x|^2}$ and $f_\lambda(x)=\lambda^{-d/2}f(x/\lambda)$, $\lambda>0$. Explicit computations show that $\|f_\lambda\|_{L^2}=1$ and 
\[
|A(f_\lambda)(x,\omega)|=|V_{f_\lambda} f_\lambda(x,\omega)|= e^{-\frac{\pi}{2\lambda^2}|x|^2-\frac{\pi \lambda^2}{2}|\omega|^2},
\]
see, e.g., \cite[Proposition 1.48 and Appendix A]{folland}.
Since $A(f_\lambda)$ is continuous at the point $(x_0,0)$, of positive Lebesgue density for $\Omega$,
\[
\|A(f_\lambda)\|_{L^\infty(\Omega)}\geq e^{-\frac{\pi}{2\lambda^2}|x_0|^2},
\]
which implies
\[
\liminf_{\lambda\to+\infty} \|A(f_\lambda)\|_{L^\infty(\Omega)}\geq 1.
\]
Concerning the existence and characterization of maximizers, we invoke the following \emph{radar correlation estimate}:
\begin{align}\label{eq_d}
|A(f)(x,\omega)|<A(f)(0,0)=1, \quad \mbox{if } \|f\|_{L^2}=1;
\end{align}
see, e.g., \cite[Lemma 4.2.1]{grochenig_book}.
Hence, if $|\Omega\cap B_r|>0$ for every $r>0$, it 
follows from the continuity of $A(f)$ that every $f\in L^2(\rd)\setminus\{0\}$ is a maximizer.

If instead there exists $r_0>0$ such that $|\Omega\cap B_{r_0}|=0$, then
\[
\|A(f)\|_{L^\infty(\Omega)}= \|A(f)\|_{L^\infty(\Omega\setminus B_{r_0})}\leq \sup_{(x,\omega)\in \overline{\Omega\setminus B_{r_0}}}|A(f)(x,\omega)|.
\]
Since $A(f)$ vanishes at infinity, in view of \eqref{eq_d}, this last supremum is still $<1$, because it is attained at some point of the closed set $\overline{\Omega\setminus B_{r_0}}$, which does not contain the origin. 
\end{proof}
\begin{proof}[Proof of Proposition \ref{pro counterexample}] Let us first prove \eqref{uno-bis}. From the trivial pointwise estimate $|\langle f,T_x f\rangle|\leq \|f\|_{L^2}^2$ it is clear that the supremum in \eqref{uno-bis} is $\leq |\Omega|^{1/p}$. On the other hand, for $\lambda>0$ let $B_\lambda$ be the open ball in $\rd$ with center $0$ and radius $\lambda$ and denote by $\chi_\lambda$ its characteristic function. For $K\subset\Omega$ compact, set $M_K=\max\{|x|: x\in K\}$. 

Then, for $x\in K$ and $\lambda\geq M_K$ we have \[
\langle \chi_\lambda, T_x \chi_\lambda\rangle\geq |B_{\lambda-M_K}|
\]
so that, for $\lambda\geq M_K$,
\begin{align*}
\frac{\lc\int_\Omega |\langle \chi_\lambda,T_x \chi_\lambda\rangle|^p dx\rc^{1/p}}{\|\chi_\lambda\|^2_{L^2}}&\geq \frac{\lc\int_K |\langle \chi_\lambda,T_x \chi_\lambda\rangle|^p dx\rc^{1/p}}{\|\chi_\lambda\|^2_{L^2}}\\
&\geq \frac{|B_{\lambda-M_K}| |K|^{1/p}}{|B_{\lambda}|},
\end{align*}
which implies 
\[
\liminf_{\lambda\to+\infty} \frac{\lc\int_\Omega |\langle \chi_\lambda,T_x \chi_\lambda\rangle|^p dx\rc^{1/p}}{\|\chi_\lambda\|^2_{L^2}} \geq |K|^{1/p}
\]
as $\lambda\to+\infty$. Since $|\Omega\setminus K|$ can be arbitrarily small, \eqref{uno-bis} is proved. 

Let us now prove that there is no extremal function. Suppose on the contrary that $f\in L^2(\rd)\setminus\{0\}$ is such an extremal function, which we can further assume to be normalized in $L^2$: $\|f\|_{L^2}=1$. Then  
\[
\int_\Omega |\langle f,T_x f\rangle|^p dx=|\Omega|,
\]
which together with the estimate $|\langle f,T_x f\rangle|\leq 1$ implies that 
\[
|\langle f,T_x f\rangle|=1
\]
for almost every $x\in \Omega$. Hence, since $|\Omega|>0$, there exists $x_0\in\rd$, $x_0\not=0$, $c\in\C$, $|c|=1$, such that 
\[
T_{x_0}f=c f.
\]
Taking the Fourier transform we \red{obtain that $f=0$, hence a contradiction.}
\end{proof}

\section{Optimization with fixed window}\label{sec fixed wind} 
To put our main result into context, we now mention the problem of the optimization of the cross-ambiguity when one of the arguments is kept fixed (or, equivalently, the optimization of the short-time Fourier transform with a fixed window). As we show below, the existence of optimizers is in this case much easier to prove --- while the characterization of such extremizers with, for example, the Gaussian window, is a challenging subject \cite{nicola_tilli}. 

\begin{proposition}\label{pro fixed wind} Let $g\in L^2(\rd) \setminus \{0\}$ and $\Omega\subset\rdd$ be a measurable subset of finite, positive measure. Let $1\leq p<\infty$. Then the supremum 
\begin{equation}\label{opt prob fixed wind}
    \sup_{f\in L^2(\rd)\setminus\{0\}}\frac{\lc\int_\Omega |A(f,g)(x,\omega)|^p dx d\omega\rc^{1/p}}{\|f\|_{L^2}}
\end{equation}
    is attained. Moreover, any maximizing sequence that is normalized in $L^2(\rd)$ has a subsequence that converges in $L^2$ to a maximizer.  
\end{proposition}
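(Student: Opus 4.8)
The plan is to run the same concentration-compactness machinery as in Theorem \ref{mainthm}, but to exploit the fact that the window $g$ is now fixed, which linearizes the objective in $f$ and thereby removes the need for the delicate asymptotic decoupling and the interpolation lemmas. Since $A(f,g)(x,\omega)=e^{\pi i x\cdot\omega}V_g f(x,\omega)$ and $|A(f,g)|=|V_g f|=|\langle f,\pi(x,\omega)g\rangle|$, the optimization objective is a norm of a \emph{linear} functional of $f$. This linearity is the whole point: it is what the abstract says makes the problem ``much easier'' than the quadratic case.

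First I would fix notation: let $L$ be the supremum in \eqref{opt prob fixed wind}, which is positive (take any $f$ with $V_g f\not\equiv 0$ on $\Omega$) and finite (from $|A(f,g)|\leq\|f\|_{L^2}\|g\|_{L^2}$, so $L\leq \|g\|_{L^2}|\Omega|^{1/p}$). Take a normalized maximizing sequence $f^{(n)}$ and apply the profile decomposition of Section \ref{subsec conc comp}, writing $f^{(n)}=\sum_{j=1}^k \pi(z^{(n)}_j)f_j+w^{(n)}_k$. Unlike the ambiguity case, there is no symmetry to quotient out here: $|V_g f|$ is \emph{not} invariant under $\pi(z)$, so the maximizing sequence should genuinely converge (up to subsequence) rather than only up to time-frequency shifts, and we should expect essentially a single nonzero profile located at a bounded dislocation. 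Accordingly, my first concrete step is to analyze $A(\pi(z^{(n)}_j)f_j,g)(x,\omega)=\langle \pi(z^{(n)}_j)f_j,\pi(x,\omega)g\rangle$ up to unimodular factors; using \eqref{eq_uno}--\eqref{eq_dos} this equals a phase times $V_g f_j(x-x^{(n)}_j,\omega-\omega^{(n)}_j)$, i.e.\ a translate (in phase space) of $V_g f_j$. If $|z^{(n)}_j|\to\infty$ then this translate escapes to infinity and its $L^p(\Omega)$ mass tends to $0$ by dominated convergence (using $V_g f_j\in L^\infty$ vanishing at infinity, so it lies in $L^p$ near $\Omega$ after the dislocation moves out); if $z^{(n)}_j$ stays bounded, pass to a subsequence so $z^{(n)}_j\to z_j$ and the contribution converges to an honest translate of $V_g f_j$. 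By \eqref{tre} at most one $j$ can have a bounded dislocation sequence, so only one profile contributes in the limit.

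The core estimate to establish is then an asymptotic \emph{superadditivity-to-subadditivity} bound in $L^p(\Omega)$. Using linearity, $V_g f^{(n)}=\sum_{j=1}^k V_g(\pi(z^{(n)}_j)f_j)+V_g w^{(n)}_k$, and I would argue as in Step 3 of Theorem \ref{mainthm} that the remainder $\|V_g w^{(n)}_k\|_{L^p(\Omega)}$ is negligible as $k\to\infty$ after $n\to\infty$: indeed $\|V_g w^{(n)}_k\|_{L^\infty}=\|w^{(n)}_k\|_{M^\infty}\to 0$ by \eqref{cinque} (choosing $g$ as the window), while $\|V_g w^{(n)}_k\|_{L^2(\rdd)}=\|w^{(n)}_k\|_{L^2}\leq 1$ is uniformly bounded, so interpolating gives smallness in $L^p(K)$ for compact $K\subset\Omega$, and \eqref{dieci}-style uniform $L^\infty$ control handles $\Omega\setminus K$ of small measure. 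Combining, the triangle inequality yields $L\le \|V_g f_{j_0}\|_{L^p(\Omega)}$ (translated), where $j_0$ is the surviving profile, while $\|V_g f_{j_0}\|_{L^p(\Omega)}\le L\|f_{j_0}\|_{L^2}\le L$ by \eqref{quattro}. The squeeze forces $\|f_{j_0}\|_{L^2}=1$, so by \eqref{quattro} all other profiles vanish and $w^{(n)}_{j_0}\to 0$ in $L^2$; hence $\pi(z^{(n)}_{j_0})^* f^{(n)}\to f_{j_0}$ strongly in $L^2$ by Remark \ref{rem_a} together with norm convergence. After absorbing the convergent dislocation $z^{(n)}_{j_0}\to z_{j_0}$ and cocycle into a genuine $L^2$-limit, $f^{(n)}$ itself converges (up to subsequence) to a maximizer.

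The main obstacle, and the reason this is genuinely easier than Theorem \ref{mainthm}, is simply to verify carefully that only one profile survives and that its dislocation stays bounded. In the ambiguity case the objective is shift-invariant, so a profile escaping to infinity retains its full mass and one must track oscillatory cancellation via Lemma \ref{lemma two}; here, by contrast, $|V_g f|$ genuinely decays at phase-space infinity against the fixed window, so an escaping profile simply loses its mass on $\Omega$ and no interpolation or decoupling estimate is needed. The only point requiring care is the case $1\le p$ with possibly several bounded-dislocation profiles: but \eqref{tre} prevents two distinct profiles from having bounded mutual distance, and the strict subadditivity from $\|f_{j_0}\|_{L^2}^2\le\sum_j\|f_j\|_{L^2}^2\le 1$ combined with the linear (hence exactly additive in the limit, not merely subadditive) structure closes the argument uniformly in $p$, so no separate treatment of $p=1$ versus $p>1$ is needed.
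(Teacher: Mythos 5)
Your argument is correct in outline, but it takes a far heavier route than the paper's. The paper does not use the profile decomposition at all here: it simply observes that $f\mapsto \|A(f,g)\|_{L^p(\Omega)}$ is \emph{sequentially weakly continuous} on $L^2(\rd)$ --- if $f^{(n)}\rightharpoonup f$ then $A(f^{(n)},g)(x,\omega)=e^{\pi i x\cdot\omega}\langle f^{(n)},\pi(x,\omega)g\rangle\to A(f,g)(x,\omega)$ pointwise, and since $|A(f^{(n)},g)|\lesssim 1$ and $|\Omega|<\infty$ the dominated convergence theorem upgrades this to convergence of the $L^p(\Omega)$-norms. A weakly convergent subsequence of a normalized maximizing sequence then has a limit $f$ with $\|A(f,g)\|_{L^p(\Omega)}=L>0$ and $\|f\|_{L^2}\le 1$; the bound $\|A(f,g)\|_{L^p(\Omega)}\le L\|f\|_{L^2}$ forces $\|f\|_{L^2}=1$, which gives both maximality and strong convergence. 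Your concentration-compactness version reaches the same conclusion and correctly identifies the mechanism (an escaping profile loses all its $L^p(\Omega)$ mass because $|V_gf|$ is not shift-invariant), but it pays for machinery the linear structure makes unnecessary; what the paper's route buys is precisely that the loss of mass under dislocation is already encoded in weak continuity. One technical point in your write-up deserves care: the identity $\|V_g w^{(n)}_k\|_{L^\infty}=\|w^{(n)}_k\|_{M^\infty}$ ``choosing $g$ as the window'' is not licensed by the stated norm equivalence, since $g$ is only assumed to be in $L^2(\rd)$ and the equivalence of $M^\infty$-norms holds for windows in a smaller class (e.g.\ $M^1$ or $\mathcal{S}$); for a general $L^2$ window the change-of-window estimate only returns the trivial bound $\|f\|_{L^2}\|g\|_{L^2}$. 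This is repairable --- the concentration-compactness theory actually yields $D$-weak vanishing of the remainders, i.e.\ $\sup_z|\langle w^{(n)}_k,\pi(z)h\rangle|\to 0$ for \emph{every} $h\in L^2(\rd)$, so you may test directly against $h=g$ --- but as written the step quietly assumes more regularity of $g$ than the hypothesis provides.
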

 
\begin{proof}[Proof of Proposition \ref{pro fixed wind}]
We claim that the functional $f\mapsto \|A(f,g)\|_{L^p(\Omega)}$ is sequentially  weakly continuous on $L^2(\rd)$. Indeed, if $f^{(n)}$ converges weakly to $f\in L^2(\rd)$, it follows at once from the definition of the cross-ambiguity function that $A(f^{(n)},g)\to A(f,g)$ pointwise in $\rdd$, and moreover $|A(f^{(n)},g)|\leq \|f^{(n)}\|_{L^2}\|g\|_{L^2}\lesssim 1$ on $\rdd$, so that the claim follows from the dominated convergence theorem. 

\red{Let $f^{(n)}$ be a maximizing sequence with 
$\|f^{(n)}\|_{L^2}=1$, and let $L$ be the supremum in \eqref{opt prob fixed wind}. Since $|\Omega|>0$ and $g \not\equiv 0$, it follows that $L>0$. Indeed, it is sufficient to consider a point $z_0\in\Omega$ of positive Lebesgue density for $\Omega$ and observe that the function $A(\pi(z_0)g,g)$ is continuous and $|A(\pi(z_0)g,g)(z_0)|=\|g\|_{L^2}^2>0$. 

 Then $f^{(n)}$  has a subsequence, that we still denote by $f^{(n)}$, weakly convergent to some $f\in L^2(\rd)$, and, by the above mentioned sequential weak continuity, \linebreak $\|A(f,g)\|_{L^p(\Omega)}=L$. Since $L>0$, $f\not=0$. 
\red{In addition, $\|f\|_{L^2}\leq \liminf_{n\to\infty}\|f^{(n)}\|_{L^2}\leq 1$,
 so that $f$ is a maximizer and $\|f\|_{L^2}=1$.} As a consequence, $f^{(n)}\to f$ in $L^2$. }
\end{proof}
\begin{remark}\label{rem localiz}
For $p=2$, the existence of a maximizer for the problem \eqref{opt prob fixed wind} also follows from the spectral properties of the non-negative bounded operator $V_g^\ast \chi_\Omega V_g$ on $L^2(\rd)$. Indeed, $|A(f,g)|=|V_g f|$, so that 
\[
\int_\Omega |A(f,g)(x,\omega)|^2 \, dxd\omega =\langle V_g^\ast \chi_\Omega V_g f,f\rangle. 
\]
Since $|\Omega|<\infty$, the operator $ V_g^\ast \chi_\Omega V_g$ is compact (in fact, trace class \cite{cordero_grochenig,wong_book}), so that any eigenfunction corresponding to the maximum eigenvalue is a maximizer for the problem \eqref{opt prob fixed wind} (with $p=2$).

\end{remark}
We emphasize that for the optimization problem in Theorem \ref{mainthm} we could not have argued as in the proof of Proposition \ref{pro fixed wind}, because of the lack of sequential weak upper semicontinuity of the corresponding functional, as shown below.
\begin{proposition}\label{pro counter}
Let  $\Omega\subset\rdd$ be a measurable subset of finite, positive measure. The functional $\|A(f) \|_{L^2(\Omega)}$ on $L^2(\rd)$ is not sequentially weakly upper semicontinuous at any point. 
\end{proposition}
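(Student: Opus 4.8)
The plan is to show that at \emph{every} $f_0\in L^2(\rd)$ the functional $\Phi(f):=\|A(f)\|_{L^2(\Omega)}$ fails to be sequentially weakly upper semicontinuous along a suitable sequence $f^{(n)}\rightharpoonup f_0$. The guiding mechanism is the one behind the profile decomposition: a fixed profile transported far away in time-frequency becomes weakly negligible, yet it keeps contributing a fixed, non-vanishing amount of $L^2(\Omega)$-mass to the ambiguity function, so that $\Phi$ jumps up in the limit. Concretely, I would first fix a normalized Gaussian $\phi$ as in the proof of Proposition \ref{pro counter 2}, for which $A(\phi)$ is continuous and strictly positive, so that $\|A(\phi)\|_{L^2(\Omega)}>0$ since $|\Omega|>0$. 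Then, choosing any $z_n\in\rdd$ with $|z_n|\to\infty$, I would set $f^{(n)}=f_0+\pi(z_n)\phi$. By the dislocation property \eqref{eq_dis} we have $\pi(z_n)\phi\rightharpoonup 0$, hence $f^{(n)}\rightharpoonup f_0$ in $L^2(\rd)$, making $f^{(n)}$ an admissible test sequence.

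Next I would expand $A(f^{(n)})$ by sesquilinearity into $A(f_0)$, the two mixed terms $A(f_0,\pi(z_n)\phi)$ and $A(\pi(z_n)\phi,f_0)$, and the self-term $A(\pi(z_n)\phi)$. The two mixed terms tend to $0$ in $L^2(\Omega)$ exactly as in \eqref{nove}: each is, up to a unimodular factor, the cross-ambiguity of two fixed $L^2$ functions shifted by $\pm z_n$ in time-frequency, hence a translate of a fixed function vanishing at infinity, so that dominated convergence on the finite-measure set $\Omega$ applies. For the self-term, the covariance relation \eqref{eq cov amb} gives $A(\pi(z_n)\phi)=M_{Jz_n}A(\phi)$, whence $\|A(\pi(z_n)\phi)\|_{L^2(\Omega)}=\|A(\phi)\|_{L^2(\Omega)}$ for every $n$.

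Discarding the vanishing mixed terms, it remains to evaluate $\lim_n\|A(f_0)+M_{Jz_n}A(\phi)\|_{L^2(\Omega)}^2$. The interaction term in the square expansion equals $2\operatorname{Re}\int_\Omega A(f_0)(w)\,\overline{A(\phi)(w)}\,e^{-2\pi i Jz_n\cdot w}\,dw$, i.e.\ the value at $Jz_n$ of the Fourier transform of the $L^1(\rdd)$ function $A(f_0)\overline{A(\phi)}\chi_\Omega$, which tends to $0$ by the Riemann--Lebesgue lemma as $|z_n|\to\infty$. Therefore $\limsup_{n\to\infty}\Phi(f^{(n)})^2=\|A(f_0)\|_{L^2(\Omega)}^2+\|A(\phi)\|_{L^2(\Omega)}^2>\Phi(f_0)^2$, contradicting weak upper semicontinuity at $f_0$; since $f_0$ is arbitrary, $\Phi$ is nowhere weakly upper semicontinuous. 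The hard part will be precisely this last decoupling of the oscillatory self-term $M_{Jz_n}A(\phi)$ from the fixed term $A(f_0)$ inside the $L^2(\Omega)$ inner product, where the fast oscillation carried by $M_{Jz_n}$ must be exploited; it is handled by Riemann--Lebesgue, in the same spirit as the asymptotic orthogonality used in Step 2 of the proof of Theorem \ref{mainthm}.
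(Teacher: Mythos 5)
Your proposal is correct and follows essentially the same route as the paper: the paper also perturbs an arbitrary $f$ by a distant time-frequency shift of a Gaussian $g$ with $\|A(g)\|_{L^2(\Omega)}>0$, kills the mixed terms as in \eqref{nove}, and decouples $A(f)$ from $M_{Jz}A(g)$ via the Riemann--Lebesgue argument of \eqref{eq_a} to get $\lim_{|z|\to\infty}\|A(f+\pi(z)g)\|_{L^2(\Omega)}^2=\|A(f)\|_{L^2(\Omega)}^2+\|A(g)\|_{L^2(\Omega)}^2$. No gaps.
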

\begin{proof}
The computations in the proof of Proposition \ref{pro counter 2} show that there exists $g\in L^2(\rd)$ - in fact, a Gaussian function - such that $\|A(g)\|_{L^2(\Omega)}>0$, since $|\Omega|>0$. 

Let now $f\in L^2(\rd)$. Then $f+\pi(z)g$ converges weakly to $f$ as $|z|\to+\infty$. On the other hand, 
\[
\|A(f+\pi(z)g)\|^2_{L^2(\Omega)}=\|A(f)+A(f,\pi(z)g)+A(\pi(z)g,f)+M_{Jz} A(g)\|^2_{L^2(\Omega)},
\]
where we used \eqref{eq cov amb}. 

Since $A(f,\pi(z)g)+A(\pi(z)g,f)\to 0$ in $L^2(\Omega)$ as $|z|\to+\infty$, by arguing as in \eqref{eq_a} we obtain 
\[
\lim_{|z|\to+\infty} \|A(f+\pi(z)g)\|^2_{L^2(\Omega)}=\|A(f)\|^2_{L^2(\Omega)}+\|A(g)\|^2_{L^2(\Omega)}>\|A(f)\|^2_{L^2(\Omega)},
\]
which gives the desired conclusion. 
\end{proof}

\section{Variations on the main result}\label{sec_var}
\subsection{The optimization problem in modulation spaces}\label{sec mod}

We now derive a variant of Theorem \ref{mainthm}, where the function is optimized over the modulation space $M^q(\rd)$, $0< q< 2$. For the precise formulation, fix a window function $g\in\mathcal{S}(\rd)\setminus\{0\}$ and $0< q\leq \infty$; then $M^q(\rd)$ is defined as the space of temperate distributions $f\in \mathcal{S}'(\rd)$ such that
\[
\|f\|_{M^q}\coloneqq \|V_g f\|_{L^q(\rdd)}<\infty. 
\]
Different windows $g$ give rise to the same space with equivalent norms. Moreover, $M^2(\rd)=L^2(\rd)$ with equivalent norms, and $M^{q_1}(\rd)\hookrightarrow M^{q_2}(\rd)$ if and only if $0<q_1\leq q_2\leq\infty$; see \cite{benyimodulation, galperin} and \cite[Chapter 10]{grochenig_book} for background.

Thus, a modulation-space norm estimate $\|f\|_{M^q} \leq 1$ prescribes a certain  \red{integrability and decay} for a function $f$. The next result allows one to incorporate such constraints into the optimization of the ambiguity function.
\begin{theorem}\label{mainthm2}
Let $\Omega\subset\rdd$ be a measurable subset of finite, positive measure, and $1\leq p<\infty$, $0< q< 2$. Then the supremum 
\begin{equation}\label{uno-ter}
    \sup_{f\in M^q(\rd)\setminus\{0\}}\frac{\lc\int_\Omega |A(f)(x,\omega)|^p dx d\omega\rc^{1/p}}{\|f\|^2_{M^q}}
\end{equation}
    is attained. Moreover if $f^{(n)}$ is any maximizing sequence normalized in $M^q(\rd)$, then there exists a subsequence (still denoted by $f^{(n)}$) and $z^{(n)}\in\rdd$ such that $\pi(-z^{(n)})f^{(n)}$ converges in $M^q$ to a maximizer.
\end{theorem}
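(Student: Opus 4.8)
The plan is to follow the same concentration-compactness architecture used for Theorem \ref{mainthm}, but now with $M^q(\rd)$ as the ambient space and with the objective normalized by $\|f\|_{M^q}^2$ instead of $\|f\|_{L^2}^2$. The crucial structural fact is that the time-frequency shifts $\pi(z)$ act \emph{isometrically} on $M^q(\rd)$, since $|V_g(\pi(z)f)(w)|=|V_gf(w-z)|$ up to a unimodular factor, so $\|\pi(z)f\|_{M^q}=\|f\|_{M^q}$. Thus $\pi(z)$ still furnishes a dislocation set and an associated profile decomposition. First I would take a maximizing sequence $f^{(n)}$ normalized in $M^q$, and record that by the embedding $M^q(\rd)\hookrightarrow M^2(\rd)=L^2(\rd)$ (valid because $q<2$), the sequence is also bounded in $L^2$. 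This lets me run the $L^2$-profile decomposition from Section \ref{subsec conc comp}, producing profiles $f_j\in L^2$, shifts $z_j^{(n)}$, and remainders $w_k^{(n)}$ satisfying \eqref{due}--\eqref{eq agg}.

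The main obstacle, and the substantive difference from Theorem \ref{mainthm}, is that the quasi-norm $\|\cdot\|_{M^q}$ with $0<q<2$ is \emph{not} built from an inner product, so the Pythagorean-type identity \eqref{eq agg}, which was central to closing the argument in the $L^2$ case, has no exact analogue. What I expect to hold instead is a \emph{superadditivity} of the $M^q$-quasi-norm along the profile decomposition: because the profiles are asymptotically separated in phase space (their time-frequency supports drift apart by \eqref{tre}), the short-time Fourier transforms of the dislocated pieces $\pi(z_j^{(n)})f_j$ become asymptotically disjointly supported in $\rdd$, and for $L^q$ with $q<2$ disjoint supports give
\[
\lim_{n\to\infty}\Big\|\sum_{j=1}^k \pi(z_j^{(n)})f_j\Big\|_{M^q}^q = \sum_{j=1}^k \|f_j\|_{M^q}^q,
\]
together with an estimate controlling the full norm from below by $\big(\sum_j \|f_j\|_{M^q}^q\big)^{1/q}$ plus the remainder contribution, yielding the constraint $\sum_{j=1}^\infty \|f_j\|_{M^q}^q \leq 1$. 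Establishing this superadditivity carefully — extracting the asymptotic disjointness of the STFTs from \eqref{tre} and passing to the limit in a quasi-norm — is the heart of the proof and the step most likely to require genuine work, since $q<1$ forbids the triangle inequality and one must argue directly with the $L^q$ functional.

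The ambiguity-function analysis itself transfers essentially verbatim. Since the $f_j$ and $w_k^{(n)}$ still lie in $L^2$ (with the relevant $L^2$-norm bounds coming from the embedding), the asymptotic decoupling estimate \eqref{eq decoupling}, the vanishing of the cross terms \eqref{nove}, and the error-term estimates \eqref{undici}--\eqref{tredici} all hold exactly as before, giving
\[
L=\lim_{n\to\infty}\|A(f^{(n)})\|_{L^p(\Omega)}\leq\Big(\sum_{j=1}^\infty \|A(f_j)\|_{L^p(\Omega)}^{p^\ast}\Big)^{1/p^\ast},\qquad p^\ast=\min\{p,p'\}.
\]
Combining $\|A(f_j)\|_{L^p(\Omega)}\leq L\|f_j\|_{M^q}^2$ (the defining bound for the new supremum) with the constraint $\sum_j\|f_j\|_{M^q}^q\leq 1$ then forces concentration on a single profile: one estimates $\sum_j\|f_j\|_{M^q}^{2p^\ast}\leq\big(\sum_j\|f_j\|_{M^q}^q\big)^{2p^\ast/q}\leq 1$ because $2p^\ast/q>1$ (here $p^\ast\geq 1$ and $q<2$ cooperate), with equality only if all the mass sits in one profile $f_1$ with $\|f_1\|_{M^q}=1$.

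\begin{remark}
The case $p=1$, $p^\ast=1$ may need the same separate treatment as in Theorem \ref{mainthm}: there equality in the chain does not immediately single out one profile, but any nonzero profile $f_j$ realizing equality in its defining bound is itself a maximizer. The final convergence statement follows as in the $L^2$ case: from $f^{(n)}=\pi(z_1^{(n)})f_1+w_1^{(n)}$, the $M^q$-superadditivity forces $\|w_1^{(n)}\|_{M^q}\to 0$, whence $\pi(-z_1^{(n)})f^{(n)}\to cf_1$ in $M^q$ after adjusting by a convergent cocycle phase.
\end{remark}
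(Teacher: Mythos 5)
Your plan matches the paper's proof in all essentials: the $L^2$ profile decomposition obtained via the embedding $M^q\hookrightarrow L^2$, the key new estimate $\sum_j\|f_j\|_{M^q}^q\le 1$ derived from the asymptotic disjointness of the short-time Fourier transforms after truncation to compact sets (the paper's Step 2, which also invokes \eqref{cinqueprima} to control the remainder on the shifted compacts and treats $q\ge1$ and $0<q<1$ separately because of the quasi-norm), and the closing chain of inequalities in which the strict $\ell^q\hookrightarrow\ell^2$ gain from $q<2$ forces concentration on a single profile. The only cosmetic differences are that the paper uses the plain triangle inequality on \eqref{sei} rather than the $p^\ast$-decoupling lemma (so no separate case $p=1$ arises, since the strictness now comes from $q<2$ rather than from $p^\ast>1$), and it obtains the final $M^q$-convergence via the Br\'ezis--Lieb lemma applied to $V_g(\pi(z^{(n)}_1)^*f^{(n)})$ rather than by squeezing $\|w^{(n)}_1\|_{M^q}\to0$ out of the superadditivity estimate.
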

\begin{proof}[Proof of Theorem \ref{mainthm2}]
\noindent {\bf Step 1}. \emph{Profile decomposition in $M^q$}.

	The first part of the proof is
similar to (in fact, simpler than) the one of Theorem \ref{mainthm} and it will only be sketched. Let $L$ be the supremum in \eqref{uno-ter}; as in the proof of Theorem \ref{mainthm}, we note that $L>0$ since $|\Omega|>0$.

 Consider a maximizing sequence $f^{(n)}$, now normalized in $M^q$:  $\|f^{(n)}\|_{M^q}=1$. Since $0< q< 2$, we have $M^q\hookrightarrow L^2$, so that the sequence $f^{(n)}$ is bounded in $L^2$ and we can apply (after passing to a suitable subsequence) the profile decomposition in $L^2$ as described in Section \ref{subsec conc comp}, albeit with minor modifications; cf. \cite[Theorem 4.5.3]{tao} or \cite[Theorem 3.1 and its proof]{tintarev_book}. The formulas  \eqref{due}, \eqref{tre}, \eqref{cinque} and \eqref{eq agg} hold, whereas \eqref{quattro} is now replaced by 
\[
\sum_{j=1}^k\|f_j\|^2_{L^2}+\limsup_{n\to\infty}\|w^{(n)}_k\|^2_{L^2}\leq C
\]
\red{for some $C>0$}, since $\limsup_{n\to\infty}\|f^{(n)}\|_{L^2}$ is still finite but no longer necessarily $\leq 1$.
While this is sufficient to prove \eqref{nove}, \eqref{undici}, \eqref{dodici}, \eqref{tredici}, Step 4 of the proof of Theorem \ref{mainthm} requires some modifications. To complete the proof, we will prove that the profiles $f_j$ are not merely in $L^2(\rd)$ but actually belong to $M^q(\rd)$, and, moreover, satisfy the following precise norm estimate:
\begin{equation}\label{eq decomp mq}
\sum_{j=1}^\infty \|f_j\|_{M^q}^q\leq 1.
\end{equation}
Postponing the proof of this fact, let us see how to deduce the existence of optimizers. We start from the expansion \eqref{sei} for $A(f^{(n)})$. By the triangle inequality and \eqref{eq cov amb}, \eqref{nove}, \eqref{undici}, \eqref{dodici}, \eqref{tredici} we obtain
\[
L=\lim_{n\to\infty}\|A(f^{(n)})\|_{L^p(\Omega)}\leq \sum_{j=1}^\infty \|A(f_j)\|_{L^p(\Omega)}.
\]
By the definition of $L$,
\[
    \|A(f_j)\|_{L^p(\Omega)}\leq L\|f_j\|_{M^q}^2,
\]
and, since $q<2$,
\begin{align*}
L=\lim_{n\to\infty} \|A(f^{(n)})\|_{L^p(\Omega)}\leq \sum_{j=1}^\infty \|A(f_j)\|_{L^p(\Omega)}&\leq L\sum_{j=1}^\infty \|f_j\|_{M^q}^2\\
&\leq L\lc\sum_{j=1}^\infty \|f_j\|_{M^q}^q\rc^{2/q}\leq L.
\end{align*}
This implies that all $f_j$ are zero except one, say $f_1$, and $\|f_1\|_{M^q}=1$. Hence $f_1$ is a maximizer.

Finally, since, by Remark \ref{rem_a}, $\pi\big(z^{(n)}_1\big)^* f^{(n)}$ converges weakly (in $L^2$) to $f_1$, it turns out that  $V_g(\pi\big(z^{(n)}_1\big)^* f^{(n)})\to V_g f_1$ pointwise. Moreover $\|V_g(\pi\big(z^{(n)}_1\big)^* f^{(n)})\|_{L^q}= 
\|f^{(n)}\|_{M^q}=1=\|f_1\|_{M^q}=\|V_g f_1\|_{L^q}$, so that $V_g(\pi\big(z^{(n)}_1\big)^* f^{(n)})$ tends to $V_g f_1$ in $L^q$ by the Br\'ezis-Lieb Lemma \cite{MR699419, lieb_book}, i.e.  $\pi\big(z^{(n)}_1\big)^* f^{(n)}\to f_1$ in $M^q$. We now invoke \eqref{eq_dos} and eliminate the cocycles as in the proof of Theorem \ref{mainthm}.

\noindent {\bf Step 2}. \emph{Precise norm estimate for the profiles}.

We now prove \eqref{eq decomp mq}. As noted in Remark \ref{rem_a}, each $f_j$ is indeed the weak limit (in $L^2$) of (adjoint) time-frequency shifts of $f^{(n)}$, which are assumed to be normalized in $M^q(\rd)$. Since $V_g f_j$ is then the pointwise limit of the corresponding short-time Fourier transforms, we see that $f_j\in M^q(\rd)$ by Fatou's lemma. 

Moreover, \eqref{cinqueprima} implies that $V_g(\pi(-z^{(n)}_j)w^{(n)}_k)(z)=V_g(w^{(n)}_k)(z+z^{(n)}_j)$ tends to zero uniformly on compact subsets of $\rdd$ as $n\to\infty$ --- due to the strong continuity of time-frequency shifts. 

Suppose first that $1\leq q<2$. \red{For fixed $k\geq 1$,
given} $\epsilon>0$ there exist therefore compact subsets $K_j\subset\rdd$, $j=1,\ldots, k$, and $n_k\in\bN$ such that ($L^q$ standing for $L^q(\rdd)$)
\[
\|V_g(\pi(z^{(n)}_j) f_j)\chi_{\rdd\setminus(z^{(n)}_j+K_j)}\|_{L^q} =\|V_g (f_j)\chi_{\rdd\setminus K_j}\|_{L^q}<\epsilon
\]
and 
\[
\sum_{j=1}^k \| V_g(w^{(n)}_k)\chi_{z^{(n)}_j+K_j} \|_{L^q}<\epsilon
\]
for $n\geq n_k$. For each such $n$, by \eqref{due} and the triangle inequality,
\begin{align*}
1&=\| V_g f^{(n)}\|_{L^q}\\
&\geq \lV\sum_{j=1}^k V_g(\pi(z^{(n)}_j) f_j) \chi_{z^{(n)}_j+K_j}+V_g(w^{(n)}_k) \chi_{\rdd\setminus\cup_{j=1}^k (z^{(n)}_j+K_j)}\rV_{L^q}-(k+1)\epsilon.
\end{align*}
On the other hand, by \eqref{tre}, if $n$ is large enough the compact subsets $z^{(n)}_j+K_j$, $j=1,\ldots,k$, are pairwise disjoint (in the last summation we can consider just the indices $j$ such that $f_j\not=0$), so that 
\begin{align*}
\lV\sum_{j=1}^k V_g(\pi(z^{(n)}_j) f_j) \chi_{z^{(n)}_j+K_j}+&V_g(w^{(n)}_k) \chi_{\rdd\setminus\cup_{j=1}^k (z^{(n)}_j+K_j)}\rV_{L^q}^q\\
&\geq \sum_{j=1}^k\| V_g(\pi(z^{(n)}_j) f_j) \chi_{z^{(n)}_j+K_j}\|^q_{L^q}\\
&= \sum_{j=1}^k\| V_g (f_j) \chi_{K_j}\|^q_{L^q}\\
&\geq \sum_{j=1}^k (\|f_j\|_{M^q}-\epsilon)_+^q,
\end{align*}
where $(\cdot )_+$ denotes the positive part function.

In conclusion we have 
\[
\sum_{j=1}^k (\|f_j\|_{M^q}-\epsilon)_+^q\leq (1+(k+1)\epsilon)^q.
\]
Since $\epsilon$ and $k$ are arbitrary, we have proved \eqref{eq decomp mq} in the case $1 \leq q < 2$.

The argument needs to be slightly adapted for $0<q<1$. In this case we choose the compact subsets $K_j\subset\rdd$, $j=1,\ldots,k$, and $n_k\in\bN$ so that 
\[
\|V_g(\pi(z^{(n)}_j) f_j)\chi_{\rdd\setminus(z^{(n)}_j+K_j)}\|_{L^q}^q =\|V_g (f_j)\chi_{\rdd\setminus K_j}\|_{L^q}^q<\epsilon
\]
and 
\[
\sum_{j=1}^k \| V_g(w^{(n)}_k)\chi_{z^{(n)}_j+K_j} \|_{L^q}^q<\epsilon
\]
for $n\geq n_k$. Again, by \eqref{due} and the triangle inequality, now for $\|\cdot\|_{L^q}^q$,
\begin{align*}
1&=\| V_g f^{(n)}\|^q_{L^q}\\
&\geq \lV\sum_{j=1}^k V_g(\pi(z^{(n)}_j) f_j) \chi_{z^{(n)}_j+K_j}+V_g(w^{(n)}_k) \chi_{\rdd\setminus\cup_{j=1}^k (z^{(n)}_j+K_j)}\rV_{L^q}^q-(k+1)\epsilon.
\end{align*}
An argument similar to that used in the previous case now gives
\[
\red{\sum_{j=1}^k (\|f_j\|^q_{M^q}-\epsilon)\leq 1+(k+1)\epsilon,}
\]
which implies
\eqref{eq decomp mq} also for $0<q<1$.
\end{proof}

\subsection{Optimization with respect to Gabor systems}

While the constraint $f \in M^q(\mathbb{R}^d)$ in Theorem \ref{mainthm2} is independent of the choice of the window $g \in \mathcal{S}(\mathbb{R}^d)$, the functional optimized in \eqref{uno-ter} does depend on $g$ because it involves the window-dependent (quasi-)norm $\|f\|_{M^q}$. In practice, such norms are often replaced by certain discrete counterparts computed in terms of so-called \emph{Gabor systems}.

Precisely, consider a full-rank lattice $\Lambda\subset\rdd$ and $g\in\cS(\rd)$ such that the set of functions $\{\pi(\lambda)g\}_{\lambda\in\Lambda}$ is a \emph{frame for $L^2(\rd)$}, i.e.,
\[
\|f\|^2_{L^2}\lesssim \sum_{\lambda\in\Lambda}|\langle f, \pi(\lambda) g\rangle|^2\lesssim \|f\|^2_{L^2}.
\]
Then it turns out that the quantity
\begin{equation}\label{eq_aa}
|f|_{M^q} \coloneqq \lc\sum_{\lambda\in\Lambda} |\langle f, \pi(\lambda) g\rangle|^q\rc^{1/q}
\end{equation}
(with obvious changes if $q=\infty$) gives an equivalent (quasi-)norm in $M^q(\rd)$, $0<q\leq\infty$ \cite{benyimodulation,galperin}, \cite[Chapter 10]{grochenig_book}. The next result is an analog of Theorem \ref{mainthm2} for the discrete (quasi-)norm \eqref{eq_aa}.

\begin{theorem}\label{mainthm3}
The statement in Theorem \ref{mainthm2} is still valid if \eqref{uno-ter} is replaced by
\begin{equation*}
\sup_{f\in M^q(\rd)\setminus\{0\}}\frac{\lc\int_\Omega |A(f)(x,\omega)|^p dx d\omega\rc^{1/p}}{|f|_{M^q}^2}.
\end{equation*}
\end{theorem}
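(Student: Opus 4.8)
The plan is to mimic the proof of Theorem \ref{mainthm2} almost verbatim, since the only change is that the continuous quasi-norm $\|f\|_{M^q}=\|V_g f\|_{L^q(\rdd)}$ is replaced by the discrete quasi-norm $|f|_{M^q}=(\sum_{\lambda\in\Lambda}|\langle f,\pi(\lambda)g\rangle|^q)^{1/q}$. The two are equivalent, so all the qualitative facts used in the proof of Theorem \ref{mainthm2} remain valid: $M^q\hookrightarrow L^2$ for $0<q<2$, a normalized maximizing sequence is bounded in $L^2$, and the profile decomposition of Section \ref{subsec conc comp} applies after passing to a subsequence. The entire deduction of the existence of a maximizer from the \emph{precise norm estimate} $\sum_{j=1}^\infty |f_j|_{M^q}^q\leq 1$ goes through unchanged once that estimate is re-established for the discrete quasi-norm.

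Thus the heart of the matter is to prove the discrete analog of \eqref{eq decomp mq}, namely $\sum_{j=1}^\infty |f_j|_{M^q}^q\leq 1$, where $f^{(n)}$ is now normalized so that $|f^{(n)}|_{M^q}=1$. First I would record, as in Step 2 of the proof of Theorem \ref{mainthm2}, that each profile $f_j$ is the weak $L^2$-limit of $\pi(z^{(n)}_j)^*f^{(n)}$ (Remark \ref{rem_a}), hence $\langle f_j,\pi(\lambda)g\rangle=\lim_n\langle f^{(n)},\pi(z^{(n)}_j+\lambda)g\rangle$ for each fixed $\lambda\in\Lambda$; by Fatou's lemma applied to the counting measure on $\Lambda$ this already gives $f_j\in M^q$. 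Next I would replace the truncation to compact sets $K_j\subset\rdd$ used in the continuous case by truncation to finite subsets $\Lambda_j\subset\Lambda$. The role of \eqref{cinqueprima} is played by the fact that, for each fixed $\lambda$, $\langle w^{(n)}_k,\pi(z^{(n)}_j+\lambda)g\rangle=\langle\pi(z^{(n)}_j)^*w^{(n)}_k,\pi(\lambda)g\rangle\to 0$; since $\Lambda_j$ is finite this convergence is automatically uniform over $\lambda\in\Lambda_j$, so no appeal to strong continuity is needed. The almost-disjointness of the continuous sets $z^{(n)}_j+K_j$ is replaced by the eventual disjointness of the finite lattice-translates $z^{(n)}_j+\Lambda_j$: here I would use that $\Lambda$ is a fixed lattice and that $|z^{(n)}_j-z^{(n)}_{j'}|\to\infty$ by \eqref{tre}, so for $n$ large the points $\{z^{(n)}_j+\lambda:\lambda\in\Lambda_j\}$ and $\{z^{(n)}_{j'}+\lambda':\lambda'\in\Lambda_{j'}\}$ are distinct for $j\neq j'$ (both being subsets of the countable set $\bigcup_m(z^{(n)}_m+\Lambda)$, which need not lie in a single lattice, but whose finitely many relevant points separate). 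With these substitutions the $\ell^q$-triangle inequality over $\Lambda$ (for $1\leq q<2$) or its $q$-th-power version (for $0<q<1$) yields exactly the same chain of estimates, leading to $\sum_{j=1}^k(|f_j|_{M^q}-\epsilon)_+^q\leq(1+(k+1)\epsilon)^q$ (resp.\ $\sum_{j=1}^k(|f_j|_{M^q}^q-\epsilon)\leq 1+(k+1)\epsilon$), and then letting $\epsilon\to 0$ and $k\to\infty$ gives the claim.

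Finally, for the compactness conclusion I would argue as in the last paragraph of Step 1 of Theorem \ref{mainthm2}: $\pi(z^{(n)}_1)^*f^{(n)}\to f_1$ weakly in $L^2$, hence $\langle\pi(z^{(n)}_1)^*f^{(n)},\pi(\lambda)g\rangle\to\langle f_1,\pi(\lambda)g\rangle$ for every $\lambda$, i.e.\ the sequences of coefficients converge pointwise on $\Lambda$; together with $|\pi(z^{(n)}_1)^*f^{(n)}|_{M^q}=1=|f_1|_{M^q}$ and the Br\'ezis-Lieb lemma (now on $\ell^q(\Lambda)$) one concludes convergence in the $|\cdot|_{M^q}$ quasi-norm, and the cocycles are eliminated via \eqref{eq_dos} exactly as before.

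The main obstacle is the bookkeeping in the discrete truncation argument, specifically verifying the eventual disjointness of the translated finite index sets $z^{(n)}_j+\Lambda_j$. Unlike the continuous case, where disjointness of compact sets follows immediately from $|z^{(n)}_j-z^{(n)}_{j'}|\to\infty$, here one must ensure that distinct profiles do not contribute to the \emph{same} lattice coefficient index; since the $z^{(n)}_j$ are arbitrary points of $\rdd$ (not lattice points), the sets $z^{(n)}_j+\Lambda$ need not be disjoint or even comparable, so one must work directly with the finitely many shifted points $z^{(n)}_j+\lambda$, $\lambda\in\Lambda_j$, and use that for $n$ large these finitely many points are pairwise distinct across $j$ precisely because the minimal pairwise gaps $|z^{(n)}_j-z^{(n)}_{j'}|$ exceed the (fixed, finite) diameters of $\Lambda_j\cup\Lambda_{j'}$. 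This is a genuine but routine adaptation; everything else is a transcription of the proof of Theorem \ref{mainthm2}.
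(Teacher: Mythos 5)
There is a genuine gap, and it sits exactly where you locate your ``main obstacle'' --- but the problem is more serious than bookkeeping. You run the profile decomposition with the full continuous family $\{\pi(z):z\in\rdd\}$ as dislocations, so the parameters $z^{(n)}_j$ are arbitrary points of $\rdd$. The discrete quasi-norm $|\cdot|_{M^q}$ is \emph{not} invariant under $\pi(z)$ for $z\notin\Lambda$: since $|\langle\pi(z)f,\pi(\lambda)g\rangle|=|V_gf(\lambda-z)|$, the sum $\sum_{\lambda\in\Lambda}|\langle\pi(z)f,\pi(\lambda)g\rangle|^q$ samples $|V_gf|^q$ on the \emph{shifted} lattice $\Lambda-z$, which gives a quantity equivalent to, but in general different from, $|f|_{M^q}^q$. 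Consequently the contribution of the $j$-th profile to $|f^{(n)}|_{M^q}^q$ is a sum over $(\Lambda-z^{(n)}_j)\cap K_j$ rather than over $\Lambda\cap K_j$; your truncation sets $z^{(n)}_j+\Lambda_j$ are not even subsets of the index set $\Lambda$, so the truncation scheme is ill-posed. The chain of inequalities then yields at best $\sum_j|f_j|^q_{M^q,\Lambda-r_j}\leq 1$, where $r_j$ is a limit of $z^{(n)}_j$ modulo $\Lambda$, and this does not match the quantity $|f_j|_{M^q}^2$ appearing in the bound $\|A(f_j)\|_{L^p(\Omega)}\leq L\,|f_j|_{M^q}^2$. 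The same defect breaks your Br\'ezis--Lieb step: $|\pi(z^{(n)}_1)^*f^{(n)}|_{M^q}$ need not equal $|f^{(n)}|_{M^q}=1$ when $z^{(n)}_1\notin\Lambda$.

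The paper avoids all of this by changing the dislocation set rather than the norm estimate: it observes that $\{\pi(\lambda)\}_{\lambda\in\Lambda}$ is itself a dislocation set (a projective unitary representation of the lattice), and that the associated $D$-weak convergence still implies convergence in $M^\infty$ because $|\cdot|_{M^\infty}$ and $\|\cdot\|_{M^\infty}$ are equivalent by the frame property. Running the profile decomposition over $\Lambda$ forces $z^{(n)}_j\in\Lambda$, and then the counting measure $\sum_{\lambda\in\Lambda}\delta_\lambda$ is invariant under the translations $z\mapsto z+z^{(n)}_j$ --- the paper explicitly flags this as the key point --- so the proof of Theorem \ref{mainthm2} transcribes verbatim with the Lebesgue measure replaced by that Radon measure. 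Your argument could be repaired along the same lines (write $z^{(n)}_j=\lambda^{(n)}_j+r^{(n)}_j$ with $\lambda^{(n)}_j\in\Lambda$ and $r^{(n)}_j$ in a fundamental domain, pass to a subsequence so that $r^{(n)}_j\to r_j$, and absorb $\pi(r_j)$ into the profiles), but as written the discrete precise norm estimate does not follow.
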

The derivation of Theorem \ref{mainthm3} requires minimal adaptations. Indeed, the map $\Lambda\ni\lambda\to\pi(\lambda)$ is still a projective unitary representation on $L^2(\rd)$, and
the corresponding operators $\{\pi(\lambda)\}_{\lambda\in\Lambda}$ still define a dislocation set. The corresponding notion of $D$-weak convergence reads
\[
\sup_{\lambda\in{\Lambda}} |\langle f_n-f,\pi(\lambda) h\rangle|\to 0
\]
for every $h\in L^2(\rd)$, and still implies convergence in $M^\infty$, due to the equivalence of the $|\cdot|_{M^\infty}$ and $\|\cdot\|_{M^\infty}$ norms. Thus, profile decompositions as in Section \ref{subsec conc comp} exist, now with $z^{(n)}_j\in\Lambda$. The proof of Theorem \ref{mainthm2} adapts almost verbatim ---  even in notation, by replacing the $L^q$ (quasi-)norm in $\rdd$ with respect to the Lebesgue measure by the $L^q$ (quasi-)pseudo-norm in $\rdd$ with respect to the Radon measure $\sum_{\lambda\in\Lambda}\delta_\lambda$.  The key point is that such a measure is invariant under the translations $z\mapsto z+ z^{(n)}_j$, because $z^{(n)}_j\in\Lambda$.

\section*{Acknowledgments}

The authors are very grateful to Karlheinz Gr\"ochenig for bringing to their attention the problem solved here, in connection to an unpublished manuscript of his and Markus Neuhauser.

The present research has been partially supported by the MIUR grant Dipartimenti di Eccellenza 2018-2022, CUP: E11G18000350001, DISMA, Politecnico di Torino. J. L. R. gratefully acknowledges support from the Austrian Science Fund (FWF): Y 1199.

S. I. T. is member of the Machine Learning Genoa (MaLGa) Center, Universit\`a di Genova. F. N. and S. I. T. are members of the Gruppo Nazionale per l’Analisi Matematica, la
Probabilit\`a e le loro Applicazioni (GNAMPA) of the Istituto Nazionale di Alta Matematica (INdAM).

 \section*{Statements and Declarations}

\noindent The authors declare no competing interests. Data sharing not applicable to this article as no datasets were generated or analysed during the current study.


\end{document}